\documentclass[a4paper,11pt,reqno]{article}
\usepackage{amsmath,amssymb,hyperref,amsthm,enumerate,color,stmaryrd,pgf,
tikz,comment,multirow}
\usetikzlibrary{positioning,arrows.meta,bending}
\usepackage[normalem]{ulem}
\usepackage{marginnote}
\usepackage{authblk}
\usepackage[left=2.6cm,right=2.6cm,top=3cm,bottom=3cm,bindingoffset=0cm]{geometry}
\usepackage{float}
\title{\bf On nowhere-zero $\boldsymbol{3}$-flows in Cayley graphs on groups of order $\boldsymbol{p^2n}$} 
\author[1,2]{Milad Ahanjideh\thanks{This work was supported in part by the Slovenian Research and Innovation Agency (ARIS), research projects N1-0391 and J1-70035.}}
\author[1,2]{Istv\'an Kov\'acs\thanks{This work was supported in part by the Slovenian Research and Innovation Agency (ARIS), research program P1-0285 and reserach projects 
N1-0391, J1-50000, N1-0428, N1-0429, J1-70035, J1-70047, J1-70046 and 
N1-0481.}} 
\affil[1]{FAMNIT, University of Primorska, Galgolja\v{s}ka 8, SI-6000 Koper, Slovenia}
\affil[2]{IAM, University of Primorska, Muzejski trg 2, SI-6000 Koper, Slovenia}
\newtheorem{thm}{Theorem}[section]
\newtheorem{lem}[thm]{Lemma}
\newtheorem{prop}[thm]{Proposition}

\newtheorem{conj}[thm]{Conjecture}

\theoremstyle{remark}

\def\aut{\mathrm{Aut}}

\def\cay{\mathrm{Cay}}
\def\edge{\mathrm{E}}
\def\G{\Gamma}
\def\AGaL{\mathrm{A\Gamma L}}
\def\AGL{\mathrm{AGL}}

\def\Null{\mathrm{Null}}

\def\vertex{\mathrm{V}}
\def\Z{\mathbb{Z}}
\def\F{\mathbb{F}}
\newcommand{\sg}[1]{\langle {#1}\rangle}

\begin{document}
\maketitle
\begin{abstract}
We consider Tutte's $3$-flow conjecture in the class of Cayley graphs on groups $G$ of order $p^2n$, where $p$ is a prime and $n$ is square-free and coprime to $p$. 
The conjecture has been proved by J. Zhang and Z. Zhang in the case where $n$ is a prime (Discrete Mathematics, 2023); and by the authors in the case where $p=2$ (arXiv:2603.24175). In this paper, we prove the conjecture in the case where $p > 2$ and either a Sylow $p$-subgroup or a Hall $n$-subgroup of $G$ is a cyclic group. 
\\ [+1ex]

\noindent\emph{Keywords:}  nowhere-zero $3$-flow, circular ladder, Cayley graph, solvable group. \\ [+1ex]
\noindent \emph{Math. Subj. Class.:} 05C21, 05C25.
\end{abstract}
\section{Introduction}\label{sec:intro}

Let $\G$ be a graph without loops and parallel edges allowed. 
The vertex set of $\G$ is denoted by $\vertex(\G)$ and the edge set by $\edge(\G)$. An \emph{orientation} of $\G$ is a digraph $D$ obtained from $\G$ by assigning one of the two possible directions to each edge of $\G$. 
Note that parallel edges may be assigned opposite directions. For an edge $e \in \edge(\G)$, we denote by $D(e)$ the associated arc of $D$.
For a vertex $v \in \vertex(\G)$, we denote by $D^+(v)$ the set of edges of $\G$ 
that the associated arcs in $D$ have tail $v$ and by $D^-(v)$ the set of those edges 
that the associated arcs in $D$ have head $v$. Let $A$ be an abelian group written additively and with identity element $0$. An \emph{$A$-flow} in $\G$ is a pair  $(D,\varphi)$, 
where $D$ is an orientation of $\G$ and $\varphi$ is an $A$-valued function on the edge 
set $\edge(\G)$, such that for every vertex $v \in  \vertex(\G)$, 
\[
\sum_{e\in D^{+}(v)}\varphi(e)=\sum_{e\in D^{-}(v)}\varphi(e).
\]
Moreover, if $\varphi(e) \ne 0$ for every $e \in \edge(\G)$, then $(D,\varphi)$ is called a \emph{nowhere-zero $A$-flow}. For a fixed positive integer $k$, a $\Z$-flow $(D,\varphi)$ is called a $k$-flow if 
$|\varphi(e)|< k$ for every  $e \in \edge(\G)$, where $\Z$ is the additive group 
of integers.
 
 The concept of a flow is due to Tutte~\cite{T49,T54}, who introduced it as a generalization of map coloring. Tutte showed that the existence of $\Z_k$-flow in a graph $\G$ is equivalent to the existence of $k$-flow (see, e.\,g.~\cite[Theorem~21.3]{BMbook}). He proposed three conjectures on integer flows in graphs, all of which are still open. 
Our main interest in this paper is his $3$-flow conjecture. 

\begin{conj}\label{conj}
(Tutte's $3$-flow conjecture) 
Every $4$-edge-connected graph admits a nowhere-zero $3$-flow. 
\end{conj}

This conjecture has been studied extensively. Jaeger~\cite{J} proved that every $4$-edge-connected graph admits a nowhere-zero $4$-flow. He also conjectured that there exists
a positive integer $k$ such that every $k$-edge-connected graph admits a 
nowhere-zero $3$-flow. Thomassen~\cite{T} confirmed Jaeger's conjecture for $k=8$, and this result was later improved by Lov\'asz et al.\,\cite{LTWZ}, who proved that 
every $6$-edge-connected graph admits a nowhere-zero $3$-flow. 

In this paper, our focus is on the validity of Conjecture~\ref{conj} for Cayley graphs. 
It is a well-known fact that every connected 
$k$-valent Cayley graph is $k$-edge-connected~\cite{M}. Therefore, in the class of Cayley graphs, Conjecture~\ref{conj} is equivalent to saying that every connected Cayley graph on a group $G$ of valency at least $4$ admits a nowhere-zero $3$-flow. This has been proved to be true for 
abelian groups~\cite{PSS}, nilpotent groups~\cite{NS}, dihedral groups~\cite{FL}, generalized dihedral groups~\cite{LL}, generalized quaternion groups~\cite{LL}, 
generalized dicyclic groups~\cite{AI}, supersolvable groups with a non-cyclic Sylow $2$-subgroup~\cite{ZsZ}, and for groups with a derived subgroup of square-free order~\cite{ZsZ}. It has been also proved for groups of 
order $p^2q$~\cite{ZZ} and $8p$~\cite{ZhZ}, where $p, q$ are distinct primes, and for solvable groups of order $2n$~\cite{AK}, where $n$ is square-free. 
The main result of this paper is the following theorem, which generalizes the case of groups of order $p^2q$~\cite{ZZ}: 

\begin{thm}\label{main}
Let $p$ be an odd prime and $n$ be a positive square-free integer 
coprime to $p$. Then every Cayley graph of valency at least four on a group of order $p^2n$ with either a cyclic Sylow $p$-subgroup or a cyclic Hall $n$-subgroup admits a nowhere-zero $3$-flow.
\end{thm}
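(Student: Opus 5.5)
The plan is induction on $|G|$ combined with the standard reduction lemmas for flows in Cayley graphs. Write $\Gamma=\cay(G,S)$ with $|S|\ge 4$ and $\sg{S}=G$ (otherwise every component is a Cayley graph on a proper subgroup and induction applies). The basic tools are the following.
\begin{enumerate}[(i)]
\item Every even-valent Cayley graph admits a nowhere-zero $2$-flow, since it is Eulerian.
\item If $S$ contains an element $s$ of order $2$, the perfect matching given by $s$ can be removed or contracted (parity argument). The odd-valent case then reduces to finding a nowhere-zero $3$-flow in a spanning subgraph consisting of even subgraphs plus a circular-ladder or prism structure.
\item The quotient lemma: if $N\trianglelefteq G$ and $S=S_1\cup S_2$ with $\sg{S_1}\le N$, $|S_1|\ge 4$ (or $S_1$ giving a graph with a $3$-flow), and the image of $S_2$ in $G/N$ can be handled, then the flows can be lifted and combined.
\end{enumerate}
Since $|S|$ even is trivial, I will assume $|S|$ is odd. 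Then $S$ contains an involution, so $n$ is even and $G$ has order $2\cdot p^2\cdot m$ with $m$ odd and square-free.

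Next I would establish solvability and structure. A group of order $p^2n$ with $n$ square-free and $p$ odd, with a cyclic Sylow $p$-subgroup or a cyclic Hall $n$-subgroup, is solvable. In the first case all Sylow subgroups are cyclic except none, so $G$ is metacyclic (Hölder–Burnside–Zassenhaus). In the second case I would use the existence of a normal Sylow or normal Hall subgroup via Burnside's transfer argument on the smallest prime. The key structural consequence I will aim for is that $G$ has a normal subgroup $N$ of odd order (for instance the Sylow $p$-subgroup, or the odd part of $G'$) such that $G/N$ has order $2k$ with $k$ square-free. For such quotients the authors' result on solvable groups of order $2n$ (arXiv:2603.24175), together with results for metacyclic and dihedral-type groups, gives the needed flows.

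The core argument then splits by how $S$ meets $N$. If $|S\cap N|\ge 4$, or $S\cap N$ generates a subgroup whose Cayley graph admits a nowhere-zero $3$-flow, I apply induction to $\cay(\sg{S\cap N},S\cap N)$. I handle $S\setminus N$ by noting that the cosets of $N$ permute cleanly, and that the edges from $S\setminus N$ form a graph that is an even subgraph together with an involution matching. If $S\cap N$ is small, I pass to $\cay(G/N,\bar S)$, where possibly $\bar S$ has repeated elements, giving a multigraph. I then lift its flow using the cyclic structure: when $\bar S$ involves elements with coset-repetitions, the edges with the same image in $G/N$ form parallel classes, and each class can be paired into even cycles. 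The expected template is the circular-ladder lemma: if an involution $a$ and an element $b$ with $a\notin\sg{b}$ satisfy suitable conditions, then $\cay(\sg{a,b},\{a,b^{\pm1}\})$ is a union of circular ladders or Möbius ladders of even length, which admit nowhere-zero $3$-flows, and the remaining generators contribute an even graph.

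I expect the main obstacle to be the valency-$5$ case $S=\{a,b^{\pm1},c^{\pm1}\}$ with $a$ an involution, where neither $\sg{a,b}$ nor $\sg{a,c}$ yields an even-length ladder. This happens when the ladders are Möbius ladders of odd rung count, for instance when $b$ has odd order $p$ or $p^2$ and $a$ inverts $b$. In that case I would try to find a normal subgroup $M$ (the cyclic Sylow $p$-subgroup or the cyclic Hall subgroup, using the hypothesis exactly here) on which $a$ acts by inversion. I would then combine the two odd Möbius ladders along $M$-cosets into a single structure with a $\Z_3$-flow, using explicit parity counting modulo $p$ and the fact that $p$ is odd. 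This case analysis on the action of $a$ on the cyclic factor is where the cyclicity assumption is essential.
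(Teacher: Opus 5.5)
Your plan has a genuine gap: it never handles the configuration the theorem actually hinges on. After the routine reductions (valency $5$; $F(G)$ equal to the Sylow $p$-subgroup $P$; $G=P\rtimes H$ with $H\cong\Z_n$ acting faithfully on $P$), the surviving case is $S\cap P=\{a^{\pm1}\}$ with $|a|=p$ and $S\setminus P=\{b^{\pm1},x\}$. Here $b\in H$ has odd order $m>1$, and $x$ is an involution outside $H$.

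None of your tools reaches this case:
\begin{itemize}
\item Induction on $\sg{S\cap P}$ only sees a union of $p$-cycles.
\item The quotient lemma (Proposition~\ref{quo}) needs $S\cap N=\emptyset$. If you let the $a$-edges collapse to loops, what remains is $\cay(G/P,\{Pb^{\pm1},Px\})\cong CL_m$ with $m$ odd. This is a cubic non-bipartite graph with no nowhere-zero $3$-flow, so there is nothing to lift.
\item Your branch $|S\cap N|\ge 4$ also fails as written. There the $x$-edges form a perfect matching, which cannot carry a nowhere-zero flow, so a flow on $\cay(\sg{S\cap N},S\cap N)$ does not combine with them. You need a parity subgraph containing the matching, e.g.\ ladders, or the results on (generalized) dihedral groups as in the paper.
\end{itemize}

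Your diagnosis of the obstacle is also inverted. A M\"obius ladder with an odd number of rungs is bipartite and cubic, hence has a nowhere-zero $3$-flow. If $x$ inverts $b$, then $\cay(\sg{b,x},\{b^{\pm1},x\})$ is a circular ladder, and if $x$ inverts both $b$ and $c$, Proposition~\ref{twoladders} finishes at once. The real difficulty is twofold: $\{a^{\pm1},x\}$ spans odd circular ladders $CL_p$, which are not bipartite, while $\{b^{\pm1},x\}$ spans no closed ladder with the $x$-edges as rungs, because $x$ does not normalize $\sg{b}$.

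The missing idea is Proposition~\ref{new-suffi}. Take $\Sigma=\cay(G,\{a^{\pm1},x\})$, a disjoint union of $pm$ copies of $CL_p$. Take $\Lambda$ to be the union of $p$ suitable left translates of a $2n$-cycle alternating $b$- and $x$-edges along $\sg{bx}$, with the $m$-cycles of $b$-edges glued on. $\Lambda$ carries an obvious nowhere-zero $\Z_3$-flow, and a coset computation shows that every component of $\Sigma$ contains exactly two rungs of $\Lambda$ (Lemmas~\ref{L3} and~\ref{L5}).

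That computation relies on structure your sketch never derives:
\begin{itemize}
\item the Frobenius structure when $P\cong\Z_{p^2}$;
\item the embedding $G\le\AGL_1(p^2)$ when $P\cong\Z_p\times\Z_p$ is minimal normal (Proposition~\ref{primitive}, with square-freeness of $n$ excluding the semilinear part);
\item Lemma~\ref{condition}, to force $|b|=m$;
\item a careful choice of the translates.
\end{itemize}
``Combining odd M\"obius ladders along $M$-cosets by parity counting mod $p$'' supplies none of this.

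Two smaller omissions. Odd valency $\ge 7$ needs $6$-edge-connectivity of Cayley graphs together with the theorem of Lov\'asz et al. Your normal subgroup $N$ is also unjustified, because $P$ need not be normal a priori. The paper argues via $F(G)$: if $|F(G)|$ is square-free, then so is $|G'|$, and Proposition~\ref{derived} applies.
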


Note that every finite group whose order is not divisible by $4$ is solvable. 
Hence, the Hall $n$-subgroup in the above theorem always exists. 

\medskip
The paper is organized as follows. In the next section we collect the preliminary results used in our proof. We present the proof of Theorem~\ref{main} in Section~\ref{sec:proof}.
\section{Preliminaries}\label{sec:known}

All groups and graphs in this paper are finite and unless otherwise stated, all graphs are simple. 
For a subset $E \subseteq \edge(\G)$, we denote by $\G - E$ the subgraph  obtained from $\G$ by deleting all edges in $E$.  
If each vertex of $\G$ has even valency, then $\G$ is called \emph{even}. A subgraph 
$\G'$ of $\G$ is called a \emph{parity subgraph} if $\G-\edge(\G')$ is an 
even graph. 

\begin{prop}[{\cite[Lemma~2.4]{ZZ}}]\label{parity}
Let $\G$ be a graph and $k$ be a positive integer greater than $2$. If $\G$ has a parity 
subgraph admitting a nowhere-zero $k$-flow, then $\G$ admits a nowhere-zero 
$k$-flow. 
\end{prop}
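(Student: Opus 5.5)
The statement to be proved is Proposition~\ref{parity}: if $\G$ has a parity subgraph $\G'$ admitting a nowhere-zero $k$-flow, $k\ge 3$, then $\G$ admits a nowhere-zero $k$-flow. The plan is to combine a nowhere-zero $2$-flow on the even complement with the given flow on $\G'$, using the freedom in $\Z_k$ to avoid zero sums.

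\textbf{Step 1: passing to $\Z_k$.} By Tutte's equivalence between $k$-flows and $\Z_k$-flows, quoted in the introduction, it suffices to produce a nowhere-zero $\Z_k$-flow on $\G$.

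\textbf{Step 2: a flow on the even part.} Set $H=\G-\edge(\G')$. Since $H$ is even, its edge set decomposes into edge-disjoint cycles. Orienting each cycle cyclically and assigning the value $1$ to every edge gives a $\Z$-flow $(D_H,\psi)$ on $H$ with $\psi\equiv 1$. Extending $\psi$ by $0$ outside $\edge(H)$, we regard it as a flow on all of $\G$.

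\textbf{Step 3: the flow on the parity subgraph.} Let $(D',\varphi)$ be the given nowhere-zero $k$-flow on $\G'$, viewed in $\Z_k$ and extended by $0$ to $\G$. Fix one orientation $D$ of $\G$ that agrees with $D_H$ on $\edge(H)$ and with $D'$ on $\edge(\G')$. Flows with respect to a common orientation of the same graph can be added, so consider $\varphi+\psi$.

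\textbf{Step 4: checking nowhere-zero.} On edges of $H$ the value is $1\neq 0$. On edges of $\G'$ the value is $\varphi(e)\neq 0$. The only concern is an edge lying in both $\G'$ and $H$, but $\G'$ and $H=\G-\edge(\G')$ are edge-disjoint, so no edge receives a sum. Hence $\varphi+\psi$ is a nowhere-zero $\Z_k$-flow on $\G$.

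In fact the argument works even for $k=2$ in $\Z_k$, since $\psi$ only takes the value $1$; the hypothesis $k>2$ is harmless. The point needing care is Step 2: justifying the cycle decomposition of an even graph, which follows by repeatedly removing a cycle, since an even graph with edges has minimum positive degree $\ge 2$ and so contains a cycle. If parallel edges are present, a $2$-cycle counts as a cycle. No substantial obstacle is expected.
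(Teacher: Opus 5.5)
Your argument is correct and is the standard proof of this lemma, which the paper only quotes from Zhang and Zhang: the even graph $\G-\edge(\G')$ carries a nowhere-zero $2$-flow obtained from a cycle decomposition, and adding it to the given flow on the edge-disjoint subgraph $\G'$ gives a nowhere-zero flow on $\G$. As a minor point, Step~1 is unnecessary, because the summed integer flow takes either the value $\varphi(e)$ or the value $1$ on each edge, so it is already a nowhere-zero $k$-flow without invoking Tutte's $\Z_k$ equivalence.
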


Let $G$ be a group with identity element $1_G$. A multiset $X \subseteq G$ is called a \emph{connection multiset} if $1_G \notin X$, $X=X^{-1}=\{x^{-1} \mid x \in X\}$, and for every $x \in X$, the multiplicity of $x$ and $x^{-1}$, respectively, are equal. 
Then the \emph{Cayley multigraph} $\cay(G,X)$ is defined to have vertex set $G$ and for any $g, h \in G$, there are exactly $\ell$ edges joining $g$ and $h$ if and only if $g^{-1}h \in X$ with multiplicity $\ell$ in $X$. 
In the case where $X$ is a usual subset of $G$, 
$\cay(G,X)$ is a simple graph, i.\,e., it does not posses parallel edges 
and we refer to it as a \emph{Cayley graph}.

\medskip
Let $\G=\cay(G,X)$ be a Cayley multigraph on a group $G$. For $x \in X$, we refer to the edges of the form $\{g,gx\}$, where $g$ runs over $G$, as the \emph{$x$-edges}. 
Note that the set of $x$-edges of $\G$ coincides with the set of its $x^{-1}$-edges.     
Suppose that $N$ is a normal subgroup of $G$ with $X \cap N=\emptyset$. 
Then the multiset $X/N \subseteq G/N$, defined as $X/N=\{ Nx \mid x \in X\}$, 
is a connection multiset of the quotient group $G/N$. The Cayley multigraph $\cay(G/N,X/N)$ is 
called the \emph{quotient} of $\G$ with respect to $N$, denoted by $\G/N$. 

\begin{prop}[{\rm \cite[Proposition~4.1]{NS}}]\label{quo}
Let $G$ be a group and $N$ be a normal subgroup of $G$. 
Let $X$ be a connection multiset of $G$ such that $N \cap X=\emptyset$. 
If $\cay(G/N,X/N)$ admits a nowhere-zero $k$-flow, then so does $\cay(G,X)$.
\end{prop}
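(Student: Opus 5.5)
Proposition~\ref{quo} is quoted from \cite{NS}, and I would prove it by lifting a flow along the quotient map. Throughout, $X$ is a connection multiset of $G$ with $X\cap N=\emptyset$, and $\G/N=\cay(G/N,X/N)$ admits a nowhere-zero $k$-flow. By Tutte's equivalence it suffices to produce a nowhere-zero $\Z_k$-flow on $\cay(G,X)$, so I fix a nowhere-zero $\Z_k$-flow $(D',\varphi')$ on $\G/N$ and construct one upstairs.

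The construction rests on a natural correspondence between edges. First, note that $X/N$ has no identity, since $X\cap N=\emptyset$, so $\G/N$ is loopless. Next, choose a set $Y\subseteq X$ of representatives containing exactly one element from each inverse pair $\{x,x^{-1}\}$, counted with multiplicity. If $x$ is an involution I count it once, so that the $x$-edges form a perfect matching. The edges of $\G/N$ correspond to pairs $(Ng,x)$ with $x\in Y$, namely the edge from $Ng$ to $Ngx$ labelled by that copy of $x$. Passing to $\G/N$ may turn an involution into a non-involution or create coincidences, so I will handle the labels carefully: I index the edges of $\G/N$ by the labelled multiset $X/N$ and track which element of $X$ produced each label. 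For each edge $e$ of $\cay(G,X)$ joining $g$ and $gx$ with $x\in Y$, I orient $e$ from $g$ to $gx$ precisely when the image edge $\{Ng,Ngx\}$ with label $x$ is oriented from $Ng$ to $Ngx$ in $D'$. I then set $\varphi(e)$ equal to $\varphi'$ of that image edge. The values of $\varphi$ are nonzero because the values of $\varphi'$ are.

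The key step is conservation at a vertex $g\in G$. The edges at $g$ are in bijection, through the labels in $X$, with the edges at $Ng$ in $\G/N$, and this bijection preserves orientation and value. Consequently the net outflow at $g$ equals the net outflow at $Ng$, which is $0$.

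The main obstacle is making this bijection well defined in the involution cases. If $x^2=1$, the single $x$-edge at $g$ must correspond to the single label-$x$ edge at $Ng$, which requires $(Nx)^2=N$; this holds. If instead $x^2\ne1$ but $x^2\in N$, then $Nx$ is an involution in $G/N$, yet $x$ and $x^{-1}$ contribute two parallel edges at $Ng$. In that case I keep both labels $x$ and $x^{-1}$ in $X/N$ as separate parallel edges, in line with how the multiset $X/N$ is formed, so the bijection at each vertex still holds. Once this bookkeeping is settled, the remaining checks are routine.
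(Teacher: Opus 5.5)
Your argument is correct. The map $g\mapsto Ng$ induces a covering projection $\cay(G,X)\to\cay(G/N,X/N)$, which is locally bijective on edges because $X\cap N=\emptyset$, and pulling back the orientation and the values of a nowhere-zero flow along it gives a nowhere-zero flow upstairs. Your labelling of edges by pairs $(Ng,x)$ with $x\in Y$ handles the one delicate point: when $x^2\in N\setminus\{1\}$, the two parallel edges $(Ng,x)$ and $(Ngx,x)$ must be kept distinct.

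The paper gives no proof of its own and simply cites \cite{NS}. Lifting along this covering projection is the standard argument behind that cited result, so your route matches it. The detour through Tutte's $\Z_k$-equivalence is unnecessary, since the identical lift works directly for an integer $k$-flow and preserves $|\varphi(e)|<k$.
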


Let $n \ge 2$ be an integer. The \emph{circular ladder} $CL_n$ is the Cayley graph 
$\cay(\Z_n \times \Z_2,\{(1,0),(n-1,0),(0,1)\})$, where $\Z_n$ denotes the 
additive group of integers modulo $n$.  The $(0,1)$-edges of $CL_n$ are called the \emph{rungs} and the remaining edges are called the \emph{rail edges}.  The \emph{M\"obius ladder} $ML_n$ is the Cayley graph 
$\cay(\Z_{2n},\{1,2n-1,n\})$. The $n$-edges of $ML_n$ are called the \emph{rungs} 
and the remaining edges are called the \emph{rail edges}. 
By a \emph{closed ladder} we mean a circular or M\"obius ladder.  
The use of closed ladders in the study of Tutte's conjecture for Cayley graphs was initiated in~\cite{NS}, where the following result was established.

\begin{prop}[{\cite[Theorem~3.3]{NS}}]\label{c-involution}
Let $\cay(G,X)$ be a Cayley multigraph of valency at least $4$ such that 
$X$ contains a central involution of $G$. Then $\cay(G,X)$ admits a nowhere-zero 
$3$-flow. 
\end{prop}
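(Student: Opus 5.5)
The plan is to apply Proposition~\ref{parity}: find a parity subgraph of $\G=\cay(G,X)$ that visibly carries a nowhere-zero $3$-flow. Let $z\in X$ be the central involution. The $z$-edges form the perfect matching $\{g,gz\}$. Since $z$ is central, $gxz=gzx$ for every $x\in X$, so the $x$-edges commute with this matching. This commutation is what makes small sub-Cayley-graphs containing $z$ highly structured.

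First I settle the parity bookkeeping. A non-involution $x$ contributes $2$ to the valency through the pair $x,x^{-1}$, and so does an involution of even multiplicity. Such generators never affect parity. Let $T$ be the set of involutions of $X$ occurring with odd multiplicity. Then every vertex has valency $\equiv |T| \pmod 2$. If $|T|$ is even, $\G$ is an even graph, so each component has a nowhere-zero $2$-flow, and we are done. So assume $|T|$ is odd. Then $z\in T$ may be assumed, or else $z$ has even multiplicity and we replace it by any element of $T$; I expect to keep $z$ in play in every case. The parity subgraph must then be an odd-regular spanning subgraph, namely $\cay(G,Y)$ with $Y\subseteq X$ closed under inverses and $Y$ containing all of $T$ up to even multiplicities.

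Next I would build $Y$ in cases.
\begin{enumerate}[(a)]
\item Suppose $T$ contains two further involutions $y,w$. Then $\cay(G,\{y,w,z\})$ is a disjoint union of prisms over the alternating $\{y,w\}$-cycles, which have even length. Since $z$ is central, each prism is a circular ladder $CL_m$ with $m$ even, hence a bipartite cubic graph. Such a graph has a nowhere-zero $3$-flow, obtained by orienting from one colour class to the other and using the value $1$. The remaining elements of $T$ come in pairs, and each pair gives a union of even cycles, which is absorbed into the even part.
\item Suppose $T=\{z\}$. Because the valency is at least $4$, $X$ contains either a non-involution $x$ or a repeated involution. With a repeated involution, $\cay(G,\{y,y,z\})$ or similar is a union of small multigraph pieces, which is handled directly. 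With a non-involution $x$, the components of $\cay(G,\{x,x^{-1},z\})$ are closed ladders: circular when $z\notin\langle x\rangle$, and M\"obius when $z\in\langle x\rangle$.
\end{enumerate}

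The main obstacle is case~(b): a cubic closed ladder has a nowhere-zero $3$-flow only when it is bipartite, which means $CL_m$ with $m$ even or $ML_m$ with $m$ odd. In the bad case I would enlarge $Y$ to a $5$-valent $\cay(G,\{x^{\pm1},u^{\pm1},z\})$, where $u\in X\setminus\{x^{\pm1},z\}$; such a $u$ exists because the valency is at least $4$. I would then try to construct a $3$-flow on it by hand. The idea is to combine a $\Z_3$-flow that is nonzero on the $u$-edges with one on the ladder rails, and to use the fact that $z$ commutes with both $x$ and $u$, so that the construction is invariant under right multiplication by $z$. As an alternative, I would try to replace $x$ by $x^2$ or by $xz$ to switch the ladder to the bipartite type: $\langle xz\rangle$ and $\langle x\rangle$ differ exactly in whether they contain $z$ and in the parity of the orbit length. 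Checking that one of these choices always yields a bipartite ladder, or else that the $5$-valent construction works, is the step I expect to require the real case analysis.
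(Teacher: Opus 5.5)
Your proposal has a genuine gap. The case you flag as the main obstacle, a non-bipartite closed ladder, is the entire content of the statement, and none of your suggested ways around it works. (The paper gives no proof of its own; it quotes the result from \cite{NS}.)

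\textbf{The suggested workarounds fail.} A Cayley parity subgraph cannot do the job. If $X=\{z,x^{\pm1},u^{\pm1}\}$, the only candidates are $\cay(G,Y)$ with $Y=\{z\}$, $\{x^{\pm1},z\}$, $\{u^{\pm1},z\}$ or $X$. Both ladder factors can be non-bipartite, e.g.\ for $G=\Z_3^2\times\Z_2$ with $x,u,z$ the standard generators. In that case your enlarged $Y$ is $X$ itself, and ``constructing a flow by hand'' is just the theorem again. Replacing $x$ by $x^2$ or $xz$ is not allowed: these elements are in general not in $X$, so their edges are not edges of $\G$. Taken literally, a $\Z_3$-flow invariant under $g\mapsto gz$ is necessarily $0$ on every $z$-edge, because this automorphism reverses each $z$-edge. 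The sensible ansatz is anti-invariance off the rungs, and that still leaves a real combinatorial problem on the quotient by $\sg{z}$.

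\textbf{Case (a) is wrong as stated.} If $z\in\sg{y,w}$, then $|yw|$ is even and $z=(yw)^{|yw|/2}$. The components of $\cay(G,\{y,w,z\})$ are then M\"obius ladders $ML_{|yw|}$, which are not bipartite. For $G=\sg{y,w}\cong D_8$, $z=(yw)^2$ and $X=\{y,w,z,yw,wy\}$, one has $T=\{y,w,z\}$. Here $\cay(G,\{y,w,z\})$ is the Wagner graph, which contains the $5$-cycle $1,y,yw,ywy,z$. Separately, trading $z$ for another element of $T$ when $z$ has even multiplicity discards centrality. That subcase is easy to handle directly: $\cay(G,\{z,z,y\})$ with $y\in T$ is a union of bipartite copies of $CL_2$.

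\textbf{What is missing} is a lemma that lets a non-bipartite ladder factor borrow flow from the remaining edges. Since $z$ is central, every inverse-closed pair $\{u_1,u_2\}\subseteq X\setminus\{z\}$ (either $\{u,u^{-1}\}$ or two involutions) spans closed ladders $\cay(\sg{u_1,u_2,z},\{u_1,u_2,z\})$ with the $z$-edges as rungs. If $z$ occurs once and $|X|\ge 5$ is odd, $X\setminus\{z\}$ contains two disjoint such pairs. The needed fact is that two closed-ladder factors with common rungs already force a nowhere-zero $3$-flow. This is exactly Proposition~\ref{twoladders}, from \cite{ZsZ}, which develops the closed-ladder method of \cite{NS}; with it the odd-valency case is immediate. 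The other known route is to send flow from the even part through one or two rungs of each bad ladder, as in Proposition~\ref{new-suffi}, using a parity subgraph that is not a Cayley graph. Without some such ingredient your case analysis does not close.
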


The method of closed ladders was later developed by Zhang and Zhou~\cite{ZsZ}, who proved the following 
results. 

\begin{prop}[{\cite[Lemma~4.8]{ZsZ}}]\label{twoladders}
Let $\cay(G, X)$ be a Cayley multigraph of valency at least five. Suppose that $X$ contains two inverse-closed subsets $\{u_1, u_2\}, \{v_1,v_2\}$ and an involution $x$ 
such that $\cay(\langle u_1,u_2, x \rangle,\{u_1, u_2, x\})$ and 
$\cay(\langle v_1, v_2, x \rangle,\{v_1, v_2, x\})$ are both closed ladders with the $x$-edges 
being their rungs. Then $\cay(G, X)$ admits a nowhere-zero $3$-flow. 
\end{prop}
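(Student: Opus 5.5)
The plan is to reduce to a $5$-regular spanning subgraph built from the two ladders, use Proposition~\ref{parity} to get back to $\cay(G,X)$, and construct a nowhere-zero $\Z_3$-flow on that subgraph as a sum of one flow on each family of ladders; by Tutte's equivalence a $\Z_3$-flow suffices.

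First I dispose of the case $|X|$ even. Then every vertex of $\G=\cay(G,X)$ has even valency, so every component of $\G$ is Eulerian. Orienting each component along an Euler tour and putting value $1$ on every edge gives a nowhere-zero $2$-flow, hence a nowhere-zero $3$-flow.

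So assume $|X|$ is odd, hence $|X|\ge 5$. Since $\{u_1,u_2\}$ is inverse-closed and spans the rails of a closed ladder, we have $u_2=u_1^{-1}\neq u_1$, and likewise $v_2=v_1^{-1}\neq v_1$; the sets $\{u_1,u_2\},\{v_1,v_2\},\{x\}$ are pairwise disjoint. Let $\G'$ be the spanning subgraph consisting of all $u_1$-, $v_1$- and $x$-edges. It is $5$-regular, and $\G-\edge(\G')$ is the Cayley graph on the inverse-closed set $X\setminus\{u_1,u_2,v_1,v_2,x\}$. That graph is $(|X|-5)$-regular, hence even, so $\G'$ is a parity subgraph of $\G$. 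By Proposition~\ref{parity} it suffices to give $\G'$ a nowhere-zero $\Z_3$-flow.

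By hypothesis, the $u$-edges together with the $x$-edges split into the left cosets of $\sg{u_1,x}$, each inducing a closed ladder $L_u$ with the $x$-edges as rungs; similarly for the $v$-edges and $L_v$. Every rung lies in exactly one ladder of each family. I look for $\Z_3$-flows $f_u$ supported on the $u$-ladders and $f_v$ supported on the $v$-ladders such that:
\begin{itemize}
\item $f_u$ is nonzero on all $u$-rails;
\item $f_v$ is nonzero on all $v$-rails;
\item $f_u(e)+f_v(e)\neq 0$ for every rung $e$.
\end{itemize}
Then $f_u+f_v$ is the desired flow on $\G'$.

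In a closed ladder, once the rail values are chosen in $\{\pm1\}$ (rails oriented cyclically), each rung value is forced by conservation at its two ends. The choice is legal exactly when the two ends impose the same rung value. If consecutive directed rail values are equal, the rung gets value $0$; otherwise it gets a value in $\{\pm1\}$ determined by the pattern. I take $f_u$ to be the \emph{constant} rail flow: value $1$ on every directed $u$-rail cycle and $0$ on every rung. This is valid for both circular and M\"obius ladders. It then remains to find $f_v$ nonzero on all $v$-rails and on all rungs.

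This last step is the main obstacle. A cubic graph has a nowhere-zero $3$-flow only if it is bipartite, so the single-ladder choice above fails whenever some $v$-ladder is not bipartite (odd circular or M\"obius ladder). If that happens, I would instead let $f_u$ take nonzero values on a chosen set $R$ of rungs, namely a perfect matching of rungs in each $u$-ladder, obtained by alternating the rail pattern with period two. I would then choose $f_v$ so that it vanishes on rungs in $R$ and on the other rungs avoids $-f_u$.

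Carrying this out requires understanding how the rungs of a $u$-ladder are distributed among the $v$-ladders. Both families are unions of cosets of subgroups containing $x$, so I would use the coset structure of $\sg{u_1,x}$ and $\sg{v_1,x}$ to transfer the alternating pattern consistently. This bookkeeping is where I expect the real work, and possibly a case split between circular and M\"obius ladders, to lie.
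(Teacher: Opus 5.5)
The reduction via Proposition~\ref{parity}, the idea of writing the flow as $f_u+f_v$, and the case where one family of ladders is bipartite are fine. One small correction: an inverse-closed pair spanning the rails need not be $\{u_1,u_1^{-1}\}$. It can be two distinct involutions, e.g.\ two reflections generating a dihedral group $K$, inside $K\times\Z_2$ with $x$ central. So take $\G'=\cay(G,\{u_1,u_2,v_1,v_2,x\})$.

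The genuine gap is the case where neither family is bipartite. That case is the real content of the proposition, and you leave it open. Worse, the plan you sketch for it cannot work.

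\emph{What a flow on one ladder can look like.} Solve the conservation equations on a closed ladder for a $\Z_3$-flow that is nonzero on every rail. It vanishes on all rungs outside some set $S$, where:
\begin{itemize}
\item for $CL_t$, $|S|$ is even;
\item for $ML_t$, $|S|$ is odd or $S=\emptyset$;
\item the values on $S$ are determined by $S$ up to a global sign.
\end{itemize}
So on a non-bipartite ladder ($CL_t$ with $t$ odd, or $ML_t$ with $t$ even) there is no ``period-two'' pattern.

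\emph{Why complementary supports fail.} Your scheme gives $f_u$ and $f_v$ complementary rung supports $R_u$ and $R_v$. If all ladders are circular, $|R_u|$ and $|R_v|$ are both even, so each component of $\G'$ would need an even number of rungs. This fails for $G=\Z_{15}\times\Z_2$, $u_1=(5,0)$, $v_1=(3,0)$, $x=(0,1)$: the ladders are $CL_3$ and $CL_5$, and there are $15$ rungs. You must therefore allow rungs where both $f_u$ and $f_v$ are nonzero, and there you need $f_u(e)=f_v(e)$.

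\emph{How to finish.} With the description of $S$ above, the missing step is short.
\begin{itemize}
\item Choose a set $O$ containing exactly one rung of each $v$-ladder.
\item Let $f_v$ be nonzero on all $v$-rails and on every rung outside $O$. This is admissible because $t_v-1$ has the required parity.
\item On a $u$-ladder $A$, take $S_A=A\cap O$ if that is an admissible support. Otherwise take $S_A=(A\cap O)\cup\{e'\}$ for one rung $e'\in A\setminus O$. Fix the global sign on $A$ so that $f_u(e')=f_v(e')$.
\end{itemize}
Then $f_u+f_v$ is nowhere-zero on $\G'$.

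The only obstruction is a $u$-ladder with $A\subseteq O$. To avoid it, use the bipartite incidence multigraph whose vertices are the ladders of both families and whose edges are the rungs. All its degrees are at least $2$, so each component contains a cycle. Hence it can be oriented so that every vertex has an out-edge. Let $O$ be one out-edge at each $v$-ladder; then each $u$-ladder keeps its own out-edge outside $O$.

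An argument of this kind is required and is absent from your proposal. The paper itself gives no proof; it quotes the result from Zhang and Zhou.
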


\begin{prop}[{\cite[Theorem~1.3]{ZsZ}}]\label{derived}
Every Cayley multigraph of valency at least $4$ on a group whose derived subgroup is of square-free order admits a nowhere-zero $3$-flow.
\end{prop}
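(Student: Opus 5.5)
We argue by induction on $|G|$, taking a counterexample $\cay(G,X)$ with $|G|$ minimal and, subject to that, $|X|$ minimal. The first step reduces the multiset $X$. The edges coming from a non-involution pair $\{x,x^{-1}\}\subseteq X$ form a $2$-regular spanning subgraph. Therefore, if $|X|\ge 6$, we can delete such pairs (or a repeated involution pair) and still have a parity subgraph $\cay(G,Y)$ with $|Y|\in\{4,5\}$. By Proposition~\ref{parity} it suffices to find a nowhere-zero $3$-flow in $\cay(G,Y)$. This graph is a disjoint union of copies of $\cay(\sg{Y},Y)$, and $\sg{Y}'\le G'$ still has square-free order. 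So we may assume that $G=\sg{X}$ and $|X|\in\{4,5\}$. If an element of $X$ occurs with multiplicity at least $2$, then two parallel classes of $x$-edges already carry a trivial $3$-flow (values $1$ and $-1$, or a $2$/$-1$ pattern), and a parity argument finishes the case. Hence we may take $X$ to be an ordinary set.

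If $G'=1$, then $G$ is abelian and the claim is the abelian case of the conjecture; within this framework it can be reproved by the same ladder arguments. So assume $G'\ne 1$. Since $|G'|$ is square-free, every Sylow subgroup of $G'$ is cyclic of prime order. Hence $G'$ is metacyclic and solvable, and it has a characteristic subgroup $N$ of prime order $r$. This $N$ is then normal in $G$. If $X\cap N=\emptyset$, then $G/N$ has derived subgroup $G'N/N\cong G'/N$, which again has square-free order, and $\cay(G/N,X/N)$ is a Cayley multigraph of the same valency. Minimality together with Proposition~\ref{quo} then gives the flow. We therefore may assume that every such prime-order characteristic subgroup $N\le G'$ meets $X$.

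The case $r=2$ is immediate: $N=\sg{z}$ is a normal subgroup of order $2$, hence central, so $X$ contains a central involution and Proposition~\ref{c-involution} applies. The remaining case, $r$ odd with $x\in X\cap N$, is the main obstacle. Here the $x$-edges form $r$-cycles on the cosets of $N$. Every other $y\in X$ normalises $\sg{x}$, so $y^{-1}xy=x^{k}$ for some $k$. The subgraph $\cay(\sg{x,y},\{x^{\pm1},y^{\pm1}\})$ is then a twisted torus made of $r$-cycles joined by a perfect matching of $y$-edges.

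What I would try first is to build an explicit $\Z_3$-flow on this $4$-valent piece. I would orient each $x$-cycle cyclically and give it value $1$ or $2$ according to the coset, choosing the $y$-edge values so that they cancel consistently, with the twist $x\mapsto x^k$ handled by the parity of the order of $y$ modulo $N$. If that fails, I would use an involution $z\in X$, when one exists, to build two closed ladders with rungs the $z$-edges and apply Proposition~\ref{twoladders}. If neither works, I would pass to a smaller quotient by a different prime-order characteristic subgroup of $G'$ that avoids $X$. The delicate part is ensuring that one of these options is always available when $|X|=4$ and $G'$ has several prime factors, all hit by $X$.
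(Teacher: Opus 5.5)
Your proposal has a genuine gap, and it sits in the one case that carries the content of the statement. (The paper does not prove this proposition; it quotes it from Zhang and Zhou, so your argument has to stand on its own.)

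A Cayley multigraph of even valency is an even graph, so valency $4$ is trivial, and after your deletion step everything reduces to $|X|=5$. Your closing concern about $|X|=4$ is therefore misplaced. The decisive case, $X\cap N\neq\emptyset$ with $N=\sg{x}$ normal of odd prime order, is only a list of things you ``would try''.

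The first option cannot help. $\cay(\sg{x,y},\{x^{\pm1},y^{\pm1}\})$ is $4$-regular and so trivially has a nowhere-zero flow. But in a $5$-regular graph a parity subgraph must have odd degree at every vertex, so any useful subgraph must contain the involution edges, which is where the difficulty lies. The third option is already excluded by your own standing assumption that every such $N$ meets $X$.

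The idea you are missing is to use the involution $z\in X$, which exists because $|X|$ is odd. It normalizes $\sg{x}$, hence $x^z=x^{\pm1}$, and so $\cay(\sg{x,z},\{x^{\pm1},z\})$ is a circular ladder with the $z$-edges as rungs. If the remaining pair $y^{\pm1}$ also satisfies $y^z=y^{\pm1}$ (e.g.\ because $\sg{y}$ is a second normal subgroup of prime order), then Proposition~\ref{twoladders} finishes the case, exactly as in the proof of Lemma~\ref{L2}. When no second ladder is available, i.e.\ $y^z\notin\{y,y^{-1}\}$, or when $X\setminus\sg{x}$ consists of three involutions, your proposal contains no argument at all.

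Your reduction to ordinary sets is also invalid, and it cannot be skipped: the quotients $X/N$ on which your induction runs typically have repeated elements. Two parallel classes of $x$-edges form a subgraph of degree $4$ (or $2$) at each vertex, and its complement has odd degree, so no parity argument applies. Moreover, the step never uses the hypothesis on $G'$, so it proves too much. In $A_4$, let $|x|=3$, let $z$ be an involution and take $X=\{x,x,x^{-1},x^{-1},z\}$. Contracting the cosets $g\sg{x}$ turns any nowhere-zero $\Z_3$-flow of $\cay(A_4,X)$ into one on the cubic graph formed by the $z$-edges. That graph is $K_4$, because $\sg{x}\cap\sg{x}^z=1$, and $K_4$ is cubic and not bipartite, so no such flow exists. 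This does not contradict the proposition, since $|A_4'|=4$, but it shows your step is wrong: in the multiset situation you must actually use the group structure.

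Two smaller points:
\begin{enumerate}
\item If $|X|\ge 6$ consists of distinct involutions there is nothing to delete. You need instead the edge-connectivity of connected Cayley graphs together with the $6$-edge-connected theorem.
\item The abelian case is immediate from Proposition~\ref{c-involution}, since an involution in $X$ is then central.
\end{enumerate}
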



We follow~\cite{Hbook} for the terminology and notation used in group theory. If $x, y \in G$, then $|x|$ denotes 
the \emph{order} of $x$, and $x^y$ is written for the \emph{conjugate} $y^{-1}xy$. 
The \emph{semidirect product} of $G$ with another group $H$ is denoted by $G \rtimes H$. 
The \emph{center}, \emph{derived subgroup} and \emph{Fitting subgroup} of $G$ are denoted by $Z(G), G^\prime$ and $F(G)$, respectively.  
For a subgroup $H \le G$, the \emph{centralizer} of 
$H$ in $G$ is denoted by $C_G(H)$. We shall use the following facts.

\begin{prop}[{\cite[Theorem~1.4.5]{Hbook}}]\label{N/C}
If $N$ is a normal subgroup of $G$, then $G/C_G(N)$ is isomorphic to a subgroup of $\aut(N)$.
\end{prop}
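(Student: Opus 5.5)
The plan is to realize the embedding through the conjugation action of $G$ on $N$ and then apply the first isomorphism theorem. For $g \in G$, define $\iota_g \colon N \to N$ by $\iota_g(x) = x^g = g^{-1}xg$. Since $N$ is normal in $G$, we have $g^{-1}xg \in N$ for all $x \in N$, so $\iota_g$ maps $N$ into $N$.

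First I will check that each $\iota_g$ is an automorphism of $N$. It is a homomorphism because
\[
(xy)^g = g^{-1}xg\,g^{-1}yg = x^g y^g .
\]
It is bijective because $\iota_{g^{-1}}$ is a two-sided inverse of $\iota_g$.

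Next I will define $\Phi \colon G \to \aut(N)$ by $\Phi(g) = \iota_{g^{-1}}$, so that $\Phi(g)(x) = gxg^{-1}$. The inverse is used here so that, with composition of maps written in the usual order, $\Phi$ is a homomorphism rather than an anti-homomorphism:
\[
\Phi(gh)(x) = (gh)x(gh)^{-1} = g\bigl(hxh^{-1}\bigr)g^{-1} = \Phi(g)\bigl(\Phi(h)(x)\bigr).
\]
Alternatively, one may use $g \mapsto \iota_g$ together with the convention that maps act on the right. The kernel is the same with either convention.

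The kernel of $\Phi$ consists of those $g$ with $gxg^{-1} = x$, that is $gx = xg$, for every $x \in N$. This is exactly $C_G(N)$. In particular $C_G(N)$ is a normal subgroup of $G$, and by the first isomorphism theorem $G/C_G(N) \cong \Phi(G) \le \aut(N)$, which is the claimed statement.

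There is no real obstacle in this argument. The only point needing care is the left/right convention in the homomorphism check, which is handled by choosing $\Phi(g) = \iota_{g^{-1}}$ as above.
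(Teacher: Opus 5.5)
Your argument is correct: conjugation gives a homomorphism $G \to \aut(N)$ whose kernel is $C_G(N)$, and the first isomorphism theorem gives the embedding, with the left/right convention handled properly. The paper does not prove this itself but only cites it from the textbook; your argument is the standard proof of this fact.
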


\begin{prop}[{\cite[Theorem~3.4.2(b)]{Hbook}}]\label{Fit1}
If $G$ is a solvable group, then $C_G(F(G)) \le F(G)$. 
\end{prop}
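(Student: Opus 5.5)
The claim is that $C_G(F(G)) \le F(G)$ for solvable $G$. I would argue by contradiction using a minimal normal subgroup of $G/F(G)$, the standard route.

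Write $F=F(G)$ and $C=C_G(F)$. Since $F$ is characteristic in $G$, the centralizer $C$ is normal in $G$, and therefore so is $C\cap F=Z(F)$. Suppose $C\not\le F$. Then $CF/F$ is a nontrivial normal subgroup of $G/F$. Because $G$ is solvable, so is $G/F$, and hence $CF/F$ contains a minimal normal subgroup $M/F$ of $G/F$ that is an elementary abelian $r$-group for some prime $r$.

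Put $K=M\cap C$. Then $K$ is normal in $G$ and $KF=M$, since $M\le CF$ forces $M=(C\cap M)F$ by Dedekind's law. Now $K\cap F=Z(F)\cap M=Z(F)$ is central in $K$, because $K\le C$. Moreover
\[K/(K\cap F)\cong KF/F=M/F\]
is abelian. So $K$ is a central extension of an abelian group, hence nilpotent of class at most $2$. Being normal and nilpotent, $K$ lies in $F$. But then $M=KF=F$, contradicting $M/F\neq 1$. Therefore $C\le F$.

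The step needing care is showing that $K$ is nilpotent. The key observations are that $K\cap F$ equals $Z(F)$, and so is centralized by $K$, and that the quotient $K/(K\cap F)$ is abelian. Together these give $[K,K]\le Z(F)\le Z(K)$. This is exactly why we need $F$ to be the largest normal nilpotent subgroup, so that $K\le F$. Solvability enters only to guarantee that a minimal normal subgroup of $G/F$ inside $CF/F$ is abelian.
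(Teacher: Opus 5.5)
Your proof is correct. The Dedekind-law step gives $M=(M\cap C)F$. The identification $K\cap F=Z(F)\le Z(K)$ then makes $K$ a normal nilpotent subgroup, which forces $M=KF=F$.

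The paper gives no proof of its own and only cites the textbook result. Your argument is the standard proof behind that citation. It is often phrased by taking a minimal normal subgroup of $G/Z(F)$ inside $C/Z(F)$ directly, which avoids the Dedekind step but uses the same idea.
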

\begin{prop}[{\cite[Theorem~2.3.12 ]{Hbook}}]\label{primitive}
Let $G$ be a primitive permutation group on a set $\Omega$ with the following properties:
\begin{enumerate}[(i)]
\item $G$ contains a minimal normal subgroup $N$ that is abelian. 
\item For $\alpha \in \Omega$, $G_\alpha$ contains contains an abelian normal subgroup $A$ such that $N$ is  a minimal normal subgroup of $NA$. 
\end{enumerate}
Then $|\Omega|=p^n$ for a prime $p$ and 
$G$ is permutation isomorphic to a subgroup of $\AGaL_1(p^n)$.
\end{prop}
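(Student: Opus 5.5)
We prove the statement by the standard reduction to a semilinear group: identify the point set with a finite field, then show that the point stabilizer acts through $\F^*$ and field automorphisms. The key device is the commutative algebra generated by $A$.

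\textbf{Step 1: $N$ is regular and elementary abelian.} Since $N\trianglelefteq G$ and $G$ is primitive, $N$ is transitive. An abelian transitive group is regular, so $N$ is regular on $\Omega$. Because $N$ is minimal normal and abelian, it is elementary abelian, say $N\cong \Z_p^n$. Hence $|\Omega|=|N|=p^n$.

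We then identify $\Omega$ with $N$, taking $\alpha$ to correspond to $0$. This gives $G=N\rtimes G_\alpha$, with $G_\alpha$ acting on $N$ by conjugation, hence $\F_p$-linearly. Next, $C_G(N)\cap G_\alpha$ fixes every point, so it is trivial. Therefore $G_\alpha$ embeds in $\mathrm{GL}(N)=\mathrm{GL}_n(p)$ and $G$ embeds in $\AGL_n(p)$.

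\textbf{Step 2: $N$ is a one-dimensional space over a field $\F\supseteq A$.} By (ii), $N$ is a minimal normal subgroup of $NA$. Equivalently, $N$ is an irreducible $\F_p A$-module, and $A$ acts faithfully on it, being a subgroup of $G_\alpha$.

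Let $\F$ be the $\F_p$-subalgebra of $\mathrm{End}(N)$ generated by $A$. This is the step I expect to need the most care: showing $\F$ is a field. By Schur's lemma, $D=\mathrm{End}_{A}(N)$ is a finite division ring, hence a field by Wedderburn's little theorem. Since $A$ is abelian, $\F$ is commutative and centralizes $A$, so $\F\subseteq D$. A subring of a finite field is a field, so $\F$ is a field.

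Now $N$ is an $\F$-vector space. Every $\F$-subspace is $A$-invariant, so irreducibility forces $\dim_{\F}N=1$. Hence $|\F|=p^n$, and we may identify $N$ with the additive group of $\F=\F_{p^n}$, with $A\le \F^*$ acting by multiplication.

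\textbf{Step 3: $G_\alpha$ acts semilinearly over $\F$.} Since $A\trianglelefteq G_\alpha$, conjugation by any $g\in G_\alpha$ (inside $\mathrm{GL}(N)$) maps $A$ to $A$. It therefore maps the algebra $\F=\F_p[A]$ onto itself, inducing a field automorphism $\sigma_g\in \mathrm{Gal}(\F/\F_p)$.

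Consequently $g(\lambda v)=\sigma_g(\lambda)\,g(v)$ for all $\lambda\in\F$ and $v\in N$, so $g$ is $\F$-semilinear on the one-dimensional space $N$. It follows that $g$ has the form $v\mapsto c\,v^{\sigma}$ with $c\in\F^*$ and $\sigma$ a field automorphism, that is, $g\in \Gamma L_1(p^n)$.

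\textbf{Conclusion.} Combining this with the translations by $N$, we get that $G=N\rtimes G_\alpha$ is permutation isomorphic to a subgroup of $\AGaL_1(p^n)$, as required.
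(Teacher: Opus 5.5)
Your proof is correct and complete. The paper gives no argument of its own for this proposition; it only quotes the result from the cited textbook (Theorem~2.3.12). Your route is the standard proof of that result: $N$ is regular and elementary abelian, Schur's lemma makes $\F_p[A]$ a field $\F$ with $\dim_{\F}N=1$, and $G_\alpha$ normalizes $A$, hence $\F$, and so acts $\F$-semilinearly.
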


For the next proposition, which provides a sufficient condition for a $5$-valent simple graph to admit a nowhere-zero $\mathbb{Z}_3$-flow, we need some additional notation.
For a function $\varphi \colon \edge(\G) \to \Z_3$, let 
\[
\Null(\varphi)=\{ e \in \edge(\G) \mid \varphi(e)=0\}.
\] 
Finally, for an integer $k \geq 1$, we use the symbol $[k]$ to denote the set $\{1,\ldots,k\}$.

\begin{prop}[{\cite[Lemma~2.9]{AK}}]\label{new-suffi}
Let $\G$ be a simple graph of order $n$ and valency $5$ and suppose that it contains  subgraphs $\Sigma$ and $\Lambda$ satisfying the following conditions.
\begin{enumerate}[(i)]
\item $\Sigma$ is a spanning subgraph of $\G$ with components 
$\Sigma_1,\ldots,\Sigma_k$, and  there exists an odd integer $t > 1$ such that 
$\Sigma_i \cong CL_t$ for each $i \in [k]$ (hence $n=2tk$).
\item $\Sigma \cup \Lambda$ is a parity subgraph of $\G$.  
\item For each $i \in [k]$, 
$|\edge(\Lambda) \cap \edge(\Sigma_i)| \in \{1,2\}$ and 
every edge in $\edge(\Lambda) \cap \edge(\Sigma_i)$ is a rung of $\Sigma_i$.
\item $\Lambda$ admits a $\Z_3$-flow $(D,\varphi)$ such that 
\begin{enumerate}[(a)]
\item $\Null(\varphi) \subseteq \edge(\Lambda) \cap \edge(\Sigma)$. 
\item For each $i \in [k]$, $\edge(\Lambda) \cap \edge(\Sigma_i) \not\subset \Null(\varphi)$.
\end{enumerate}
\end{enumerate}
Then $\G$ admits a nowhere-zero $\Z_3$-flow. 
\end{prop}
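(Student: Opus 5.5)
The statement to address is Proposition~\ref{new-suffi}, the sufficient condition for a $5$-valent graph. My plan is to build a nowhere-zero $\Z_3$-flow on the parity subgraph $\Sigma\cup\Lambda$ and then apply Proposition~\ref{parity}. Proposition~\ref{parity} is stated for $k$-flows, but by Tutte's equivalence a nowhere-zero $\Z_3$-flow on the parity subgraph yields a nowhere-zero $3$-flow on $\G$, hence a $\Z_3$-flow on $\G$. Note that $\Sigma\cup\Lambda$ is a multigraph in general: an edge of $\Lambda$ lying on a rung of $\Sigma_i$ is treated as a second copy of that rung. So I would work with the disjoint union of the edge multisets, in which the rungs in $\edge(\Lambda)\cap\edge(\Sigma_i)$ become doubled. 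The flow on $\Sigma\cup\Lambda$ will be the sum of $(D,\varphi)$ on the $\Lambda$-copies and suitable $\Z_3$-flows $\psi_i$ on each ladder $\Sigma_i\cong CL_t$. The goal is that every edge receives a nonzero value.

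Edges of $\Lambda$ outside $\Sigma$ are already nonzero by condition (iv)(a). For the $\Lambda$-copies of rungs, the sum is harmless: a parallel edge can be merged, i.e.\ on a doubled rung $e$ we only need the combined value $\psi_i(e)+\varphi(e)$ (with orientations aligned) to be nonzero. This is equivalent to a flow on the simple graph in which each doubled rung carries the combined value, because two parallel edges whose sum is nonzero can be re-split as two nonzero values in $\Z_3$. Concretely, if $s=\psi_i(e)+\varphi(e)\neq 0$, assign $-s$ and $-s$: these are nonzero and sum to $-2s=s$. So the task reduces to the following. For each $i$, find a $\Z_3$-flow $\psi_i$ on $CL_t$ that is nonzero on all rail edges and on all rungs not in $\Lambda$, and such that $\psi_i(e)\ne-\varphi(e)$ on each rung $e\in \edge(\Lambda)\cap\edge(\Sigma_i)$.

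The key step is the explicit flows on $CL_t$ with $t$ odd. I would use the basic circuits. The $4$-cycles through consecutive rungs $r_j, r_{j+1}$ give flows $C_j$, and the two rails, oriented cyclically, give flows $R_1$ and $R_2$. A flow of the form $a R_1 + b R_2 + \sum_j c_j C_j$ puts value $c_j-c_{j-1}$ on rung $r_j$ (with appropriate orientation) and values $a+c_j$ and $b-c_j$ on the rail edges of square $j$. A convenient family is $c_j\in\{0,1\}$ alternating around the cycle. Because $t$ is odd, exactly one "defect" occurs, where two consecutive $c_j$ are equal, giving a zero rung. This defect can be placed at any chosen rung. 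With $a=b=1$, the rail values are $1+c_j$ and $1-c_j$; these are nonzero for $c_j\in\{0,1\}$ since $1-1=0$ fails, so I would instead take $c_j\in\{0,2\}$, giving rail values $1,0$ — also bad. I therefore need to tune this: choose $a=1$, $b=2$ and $c_j\in\{0,1\}$, giving rail values $1+c_j\in\{1,2\}$ and $2-c_j\in\{2,1\}$, all nonzero. The rungs then get $\pm1$, except one defect rung with value $0$.

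Now I handle the cases. If $|\edge(\Lambda)\cap\edge(\Sigma_i)|=1$, say it is the rung $e$, condition (iv)(b) gives $\varphi(e)\neq0$. Place the defect at $e$, so $\psi_i(e)=0$ and the sum equals $\varphi(e)\ne0$. If there are two such rungs $e,f$, at least one of them, say $e$, has $\varphi(e)\neq 0$, and I place the defect at $e$. For $f$ I need $\psi_i(f)\ne-\varphi(f)$. If $\varphi(f)=0$ this is automatic. Otherwise I would use the symmetry $\psi_i\mapsto-\psi_i$, or shift the $c$-pattern by replacing $c_j$ with $1-c_j$, which negates all rung values while keeping the rails nonzero. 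One of these choices works for $f$ while leaving $e$ at $0$. The main obstacle I expect is exactly this sign bookkeeping: verifying the orientation conventions so that negating the pattern really flips the rung sign without creating zero rail values. I would check the variant $c_j\mapsto 1-c_j$ together with $(a,b)=(0,\dots)$ adjustments carefully on $CL_3$ first. Summing all pieces then gives a nowhere-zero $\Z_3$-flow on $\Sigma\cup\Lambda$, and Proposition~\ref{parity} finishes the proof.
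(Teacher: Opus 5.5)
The paper does not prove this proposition itself; it is imported from~\cite[Lemma~2.9]{AK}, so there is no in-paper argument to compare against. Judged on its own, your proof is correct.

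\textbf{Why the construction works.} Take $\psi_i=R_1+2R_2+\sum_j c_jC_j$, with both rails oriented in the same rotational direction so that $C_j$ runs against $R_2$. Let $c_j\in\{0,1\}$ alternate except for a single repetition, which is possible precisely because $t$ is odd. Then $\psi_i$ is a flow on $\Sigma_i$ with all rail values in $\{1,2\}$ and all rung values $\pm1$, except at one prescribed rung where it vanishes. Extend $\varphi$ and $\psi=\sum_i\psi_i$ by zero to $\edge(\Sigma\cup\Lambda)$ and add them. Place the zero rung of $\psi_i$ at an edge $e$ with $\varphi(e)\neq 0$, which exists by (iv)(b). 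This gives a nowhere-zero $\Z_3$-flow on the parity subgraph. Proposition~\ref{parity}, together with Tutte's $\Z_3$/$3$-flow equivalence, then finishes the argument.

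\textbf{Points to clean up.}

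First, since $\G$ is simple, $\Sigma\cup\Lambda$ is a simple subgraph of $\G$. The multigraph picture and the ``re-splitting'' of parallel edges are therefore unnecessary. A doubled-rung multigraph is not a subgraph of $\G$ to which Proposition~\ref{parity} applies directly. All that is needed is $\psi_i(e)+\varphi(e)\neq 0$ on each common rung after aligning orientations, and that is what you actually verify.

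Second, the sign bookkeeping you flag as the main obstacle is not one. $-\psi_i$ is again a $\Z_3$-flow with exactly the same zero set. If $\varphi(f)\neq 0$, then one of $\pm\psi_i(f)$ equals $\varphi(f)$, giving the nonzero sum $2\varphi(f)$. So you can drop the $c_j\mapsto 1-c_j$ variant and the planned check on $CL_3$. The false starts with $a=b=1$ and $c_j\in\{0,2\}$ should also be cut from a final write-up.

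Finally, your argument never uses the valency-$5$ hypothesis; only (ii)--(iv) and the parity of $t$ matter.
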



\begin{prop}[{\cite[Lemma~4.3]{AK}}]\label{multi}
Let $G$ be a finite group and $X$ be a connection multiset of $G$ of cardinality $5$ (computed with multiplicity) such that $X$ contains an element of order larger than $2$ and of multiplicity $1$. Then $\cay(G,X)$ admits a nowhere-zero $\Z_3$-flow. 
\end{prop}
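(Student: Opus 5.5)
The plan is to classify the connection multiset $X$ and, in each case, either write down an explicit nowhere-zero $\Z_3$-flow or reduce to a smaller graph using Proposition~\ref{parity} or Proposition~\ref{quo}. Throughout I use the standard reformulation: $\cay(G,X)$ admits a nowhere-zero $\Z_3$-flow if and only if it has an orientation in which $\mathrm{outdeg}(v)-\mathrm{indeg}(v)\equiv 0 \pmod 3$ at every vertex. For valency $5$ this means every vertex has out/in split $(4,1)$ or $(1,4)$.

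\emph{Classification of $X$.} Let $x\in X$ have order larger than $2$ and multiplicity $1$. Since multiplicities of $x$ and $x^{-1}$ agree, $x^{-1}$ also appears exactly once. The remaining multiset $R$ has three elements and is inverse-closed, so it contains an involution $z$. This leaves three possibilities: (a) $R=\{z,z,z\}$; (b) $R=\{z,z,z'\}$ or $\{z,z',z''\}$ with all entries involutions; (c) $R=\{z,y,y^{-1}\}$ with $|y|>2$, where $y\neq x^{\pm1}$ is allowed to be arbitrary.

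\emph{Case (a).} This case is explicit. The $x$-edges form a spanning union of cycles $g\to gx$, and I give each of them value $1$. Each bundle of three parallel $z$-edges between $g$ and $gz$ is oriented $g\to gz$ and given values $1,1,1$. This bundle contributes $3\equiv 0$ at both ends. The $x$-cycles are balanced, so the result is a nowhere-zero $\Z_3$-flow.

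\emph{Case (b) with a repeated involution.} A doubled involution gives $2$-cycles, each of which carries its own nowhere-zero flow. Deleting such a bundle leaves a parity subgraph by Proposition~\ref{parity}, so I may discard the double $z$-edges. This reduces to the cubic graph $\cay(G,\{x,x^{-1},z'\})$ together with the information that every vertex also carries a $2$-cycle. A cubic graph alone need not have a $3$-flow, so I will not delete the $2$-cycles outright. Instead I will use them to correct parity: orient one edge of each $z$-digon each way, so that the digon changes nothing in the outdegree count. I then look for an orientation of the cubic part in which every vertex has split $(3,0)$ or $(0,3)$, or in which the digons can shift the split to $(4,1)$/$(1,4)$. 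The goal is a $\mathrm{mod}\ 3$ orientation of the whole $5$-regular multigraph. Concretely, I direct each digon consistently ($2$ out at $g$, $2$ in at $gz$) and direct the $x$-cycles so that every vertex has $x$-split $(1,1)$. The condition at each vertex then becomes a condition on the single $z'$-edge and the digon direction, and I expect a $2$-colouring argument along the cycles of $\sg{z,z'}$ to settle it. If that fails for some group, I will fall back on Proposition~\ref{quo} with $N=\sg{z}$, when $z$ is central, or on Proposition~\ref{c-involution}.

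\emph{Case (b) with distinct involutions, and case (c).} These are the main obstacle. Here the graph is simple, apart from possibly $y=x^{\pm1}$-type coincidences that are excluded by the multiplicity assumption, and I intend to apply Proposition~\ref{new-suffi}. Take $\Sigma$ to be the components of $\cay(G,\{x,x^{-1},z\})$ when these are circular ladders $CL_t$ with $t$ odd, with the $z$-edges as rungs. The complementary $y$-edges (or $z',z''$-edges) form an even graph or a perfect matching, and $\Lambda$ is chosen among them together with one or two rungs in each ladder so that $\Sigma\cup\Lambda$ is a parity subgraph. The real work is finding a $\Z_3$-flow on $\Lambda$ whose zeros lie only on rungs and which avoids a zero on at least one rung per ladder. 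I will try to build this from a cycle in the quotient structure on the ladders. When the components of $\cay(G,\{x,x^{-1},z\})$ are not odd circular ladders, they are even circular ladders or M\"obius ladders. These admit nowhere-zero $3$-flows directly, since an even circular ladder is bipartite cubic and a M\"obius ladder can be handled similarly after adding a rail pattern, and then Proposition~\ref{parity} applies to the parity subgraph $\cay(G,\{x,x^{-1},z\})$. I expect the bookkeeping in the odd-ladder subcase to be the hardest part.
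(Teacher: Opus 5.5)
\textbf{There is a genuine gap.} The cases you call the main obstacle, (b) with three distinct involutions and (c), are exactly the cases in which $X$ is an ordinary set, i.e.\ $\cay(G,X)$ is a simple $5$-valent Cayley graph. There your proposal is not a proof: it only announces an intention to use Proposition~\ref{new-suffi}, and its premises are false.
\begin{itemize}
\item The components of $\cay(G,\{x,x^{-1},z\})$ need not be closed ladders. For $G=A_4$, $x=(1\,2\,3)$, $z=(1\,2)(3\,4)$, each component contains four $x$-triangles (it is the truncated tetrahedron).
\item A M\"obius ladder $ML_n$ with $n$ even is a non-bipartite cubic graph, so it has no nowhere-zero $3$-flow at all. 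For example $ML_2\cong K_4$ arises from $x$ of order $4$ and $z=x^2$.
\end{itemize}
No argument of this kind should be expected. These cases would give Tutte's conjecture for every $5$-valent Cayley graph whose connection set contains a non-involution, which is open. If the proposition covered them, Lemmas~\ref{L3}--\ref{L5} of this paper would be immediate, since there $X$ contains an element of order $p$ with multiplicity one. The proposition is meant for connection multisets with a repeated element. That is how it is used in Lemma~\ref{L2}: it is invoked only after showing that $X/Q$ is a genuine multiset.

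Under that reading the repeated element must be an involution $z$. A repeated $y$ with $|y|>2$ would occupy four places together with $y^{-1}$, leaving no room for $x^{\pm1}$. So $X=\{x,x^{-1},z,z,z\}$ or $X=\{x,x^{-1},z,z,z'\}$ with $z'\neq z$ an involution; only your case (a) and the repeated-involution part of (b) occur.
\begin{itemize}
\item Case (a) is fine as you wrote it.
\item In (b) your concrete plan works, but you left the decisive step as an expectation. Write $s(g)=\pm1$ according as the $z$-digon leaves or enters $g$. You need $s(gz)=s(gz')=-s(g)$, i.e.\ a proper $2$-colouring of $\cay(G,\{z,z'\})$. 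This exists because each component is a cycle whose edges alternate between $z$-edges and $z'$-edges, so it has even length $2|zz'|$.
\end{itemize}
More cleanly, in both cases the multigraph obtained by deleting the $x$-edges, $\cay(G,X\setminus\{x,x^{-1}\})$, is a bipartite cubic multigraph. Orienting all its edges from one colour class to the other with value $1$ gives a nowhere-zero $\Z_3$-flow. Its complement is the $2$-factor of $x$-edges, so it is a parity subgraph, and Proposition~\ref{parity} finishes. The fallbacks via Propositions~\ref{quo} and~\ref{c-involution} are unnecessary.
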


We conclude this section with a lemma about $5$-valent Cayley graphs on groups which have a subgroup of index $2$.

\begin{lem}\label{condition}
Let $G$ be a group of order $2n$ and $H$ be a subgroup of $G$ of order $n$.
If $\G=\cay(G,X)$ is a connected $5$-valent Cayley graph with $|X \cap H|=2$, 
then $\G$ admits a nowhere-zero $3$-flow.
\end{lem}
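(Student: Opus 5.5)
Let $X\cap H=\{a,a^{-1}\}$ with $|a|\ge 3$. The other three elements lie in $G\setminus H$; since $X=X^{-1}$ and $|X\setminus H|=3$ is odd, at least one of them is an involution $x\in G\setminus H$. The plan is to use Proposition~\ref{multi} after passing to a quotient or a multiset reduction. If this fails, we instead build the structure required by Proposition~\ref{new-suffi} or Proposition~\ref{twoladders} directly.

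\textbf{Case A:} $X\setminus H=\{x,y,y^{-1}\}$ with $|y|\ge 3$. Here $X$ contains $a$, an element of order larger than $2$ with multiplicity $1$. So Proposition~\ref{multi} applies directly to $\cay(G,X)$ and gives a nowhere-zero $\Z_3$-flow. By Tutte's equivalence this is a nowhere-zero $3$-flow.

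\textbf{Case B:} $X\setminus H=\{x,y,z\}$ consists of three involutions. Again $a\in X$ has order at least $3$ and multiplicity $1$, since $X$ is a genuine set. Proposition~\ref{multi} therefore applies verbatim.

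So in every case the hypothesis of Proposition~\ref{multi} holds. The only things to check are that $|X|=5$ counted with multiplicity (true, as $\G$ is simple and $5$-valent), and that $a\ne a^{-1}$. The latter holds because $|X\cap H|=2$ and $X\cap H$ is inverse-closed: if $a$ were an involution, the second element of $X\cap H$ would be another involution $b$, so $X\cap H=\{a,b\}$ with two involutions.

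This last subcase is the real obstacle. There $X\cap H=\{a,b\}$ with $a,b$ involutions. The elements of $X\setminus H$ are either $\{x,y,y^{-1}\}$ with $|y|\ge 3$, in which case Proposition~\ref{multi} applies to $y$, or three involutions $\{x,y,z\}$. So the hard configuration is when $X$ consists of five involutions, $a,b\in H$ and $x,y,z\notin H$.

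For this configuration, consider the products $xy$, $xz$, $yz$, which lie in $H$. I would take $\Sigma$ to be the union of the subgraphs spanned by the $x$- and $y$-edges together with the $a$-edges, and try to show that its components are circular ladders $CL_t$ on cosets of $\langle a,xy\rangle$-type subgroups. Then I would use the remaining edges (the $b$- and $z$-edges) as $\Lambda$, with $\Sigma\cup\Lambda$ a parity subgraph, and apply Proposition~\ref{new-suffi}. When $t$ is even, or the ladders are M\"obius, I would instead aim for Proposition~\ref{twoladders}. The rung involution would be $a$ (or $x$), with the two ladders given by $\{x,y\}$ and $\{x,z\}$ (alternating paths of two involutions form cycles), after checking that each generates the required closed ladder.

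A cleaner first attempt is to pass to a quotient $\G/N$ for a suitable normal subgroup $N$ avoiding $X$. For instance, take $N$ to be a normal subgroup inside $H$ such that the image of $X$ contains an element of order at least $3$ of multiplicity $1$. Then Proposition~\ref{multi} applies to the quotient, and Proposition~\ref{quo} lifts the flow. I expect the all-involution case to be the main work, handled by the ladder construction above.
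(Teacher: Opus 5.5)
There is a genuine gap. Cases A and B apply Proposition~\ref{multi} to the simple graph $\cay(G,X)$ itself. That result concerns genuine multisets, i.e.\ Cayley multigraphs with parallel edges. In Lemma~\ref{L2} it is invoked only after minimality has forced $X/Q$ to contain a repeated element. Under your reading, it would say that every $5$-valent Cayley graph whose connection set contains a non-involution admits a nowhere-zero $3$-flow. Lemmas~\ref{L3} and~\ref{L5}, where $X$ contains $a^{\pm s}$, resp.\ $[1,\pm a]$, of order $p$ and multiplicity $1$, would then be one-liners. So this cannot be what the proposition provides.

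Moreover, even on your reading, the configuration you yourself single out as the real obstacle --- five involutions, two in $H$ and three outside --- is only sketched. The components of $\cay(\sg{a,x,y},\{a,x,y\})$ need not be closed ladders, let alone copies of $CL_t$ with $t$ odd as Proposition~\ref{new-suffi} requires. The normal subgroup $N$ in your quotient idea is never specified. So the proposal does not prove the lemma.

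What you are missing is the index-$2$ hypothesis, which you never really use. Since $|G:H|=2$, we have $H\lhd G$, and right multiplication by any element of $X\setminus H$ interchanges $H$ and $G\setminus H$. Hence $\Sigma=\cay(G,X\setminus H)$ is a spanning cubic bipartite subgraph with parts $H$ and $G\setminus H$. Orienting every edge of $\Sigma$ from $H$ to $G\setminus H$ and assigning it value $1$ gives a nowhere-zero $\Z_3$-flow, because $3\equiv 0 \pmod 3$. Its complement $\G-\edge(\Sigma)=\cay(G,X\cap H)$ is $2$-regular, hence even, so $\Sigma$ is a parity subgraph. Proposition~\ref{parity} then gives the flow in $\G$. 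This is the paper's proof; it treats all cases at once and does not care which elements of $X$ are involutions.
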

\begin{proof}
Let $a \in G \setminus H$. As $H$ has index $2$ in $G$, $G$ decomposes as 
$G=H \cup Ha$.
It is easy to check that $\Sigma:=\cay(G,X \setminus H)$ is  
a bipartite subgraph of $\G$ of valency $3$ with parts $H$ and $Ha$. 
This graph admits a nowhere-zero $3$-flow by a well known result saying that a 
$3$-valent graph admits a nowhere-zero 3-flow if and only if it is bipartite (see~\cite[Proposition~6.4.2]{Dbook}). Since $\G-\edge(\Sigma)$ is an even graph, $\Sigma$ is also a parity subgraph of $\G$; applying Proposition~\ref{parity} completes the proof.
\end{proof}
\section{Proof of Theorem~\ref{main}}\label{sec:proof}

We argue by contradiction and assume that $\G=\cay(G,X)$ is a counterexample of the smallest order. Recall that $G$ is a group of order 
$p^2n$, where $p$ is an odd prime and $n$ is square-free and coprime to $p$. 
Moreover, either a Sylow $p$-subgroup or a Hall $n$-subgroup of $G$ is cyclic. By assumption, $\G$ is a connected graph of valency at least 
$4$, which does not admit a nowhere-zero $\Z_3$-flow. 
We will derive a contradiction through a sequence of lemmas. 

\begin{lem}\label{L1}
The valency of $\G$ is $5$.
\end{lem}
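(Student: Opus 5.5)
The plan is to reduce to valency $5$ by removing edges and invoking minimality only through the structure of the same graph. The key point is that $|G|=p^2n$ with $p$ odd and $n$ square-free, so $4\nmid |G|$ and a Sylow $2$-subgroup has order at most $2$. First I dispose of even valency: if $\G$ has even valency at least $4$, then $\G$ is Eulerian. Since $\G$ is a Cayley graph it is connected with $\G$ itself even, and I will use the standard reduction for Cayley graphs of valency $2k\ge 4$. This reduction picks an inverse-closed subset $Y\subseteq X$ with $|Y|\in\{4,5\}$, which exists whenever $|X|\ge 4$ because $X$ decomposes into pairs $\{x,x^{-1}\}$ and involutions. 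The removed part $X\setminus Y$ is inverse-closed of even size, or can be made so by choosing $Y$ with $|Y|\equiv |X| \pmod 2$. Then $\cay(G,X\setminus Y)$ is an even spanning subgraph, so $\cay(G,Y)$ is a parity subgraph, and by Proposition~\ref{parity} it suffices to find a nowhere-zero $3$-flow in $\cay(G,Y)$.

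The main difficulty is that $\cay(G,Y)$ need not be connected. Here I apply minimality: each component of $\cay(G,Y)$ is isomorphic to $\cay(\sg{Y},Y)$, and $\sg{Y}$ is a subgroup of $G$ whose order is $p^an'$ with $a\le 2$ and $n'\mid n$. If $|\sg{Y}|$ is not of the shape $p^2n'$, I would embed it in the hypothesis by noting that the relevant earlier results cover the smaller cases. The most natural route is to observe that the theorem's hypotheses pass to subgroups whose order is still $p^2 n'$. When $a<2$, the order $p^a n'$ is square-free times possibly $p$, and then the derived subgroup is of square-free order, or the group is handled by Proposition~\ref{derived}. For $a\le1$ the group has square-free order, so every subgroup, in particular $G'$, has square-free order and Proposition~\ref{derived} applies. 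For $a=2$, cyclicity of the Sylow $p$-subgroup or of the Hall subgroup is inherited, since subgroups of cyclic groups are cyclic and Hall subgroups of $\sg{Y}$ lie in conjugates of those of $G$. Hence if $\sg{Y}\neq G$, minimality gives the flow. So I may assume every such $Y$ generates $G$.

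To finish, I reduce the valency. If $|X|\ge 6$, I choose $Y$ of size $4$ or $5$ with the same parity as $|X|$, obtaining a connected (or, by the above, flow-admitting) smaller-valency Cayley graph on the same group. The order is unchanged, so minimality in order alone does not apply directly. I would therefore take the counterexample minimal first in order and then in valency, which is harmless since the original conclusion is for all valencies. This yields $|X|\in\{4,5\}$.

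For $|X|=4$, I recall the known result that $4$-valent Cayley graphs admit nowhere-zero $3$-flows, since $4$-regular connected Cayley graphs are Eulerian and $4$-edge-connected. More concretely, $X=\{a,a^{-1},b,b^{-1}\}$ or contains involutions. With no involution, orienting each $a$-edge $g\to ga$ and each $b$-edge $g\to gb$ with value $1$ gives in-degree equal to out-degree, which is a nowhere-zero $2$-flow and hence a $3$-flow. With involutions, $|G|$ even forces the Sylow $2$-subgroup to be $\Z_2$. In that case there is a normal $2$-complement $H$ of index $2$ by Burnside, and I would handle it via a bipartite cubic parity subgraph as in Lemma~\ref{condition}. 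This $4$-valent involution case is the step I expect to be the main obstacle, and I would try to show that $X$ must contain a nonidentity-order-$>2$ pair plus involutions outside $H$, then build the flow explicitly. Thus the valency is $5$.
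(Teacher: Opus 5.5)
Your treatment of the $4$-valent case is unfinished, and this is a genuine gap. You leave open the case where $X$ contains involutions, and the plan you sketch for it cannot work. In a $4$-regular graph every parity subgraph has even degree at each vertex, because $4-\deg_{\Sigma}(v)$ must be even. So no cubic bipartite subgraph can play the role it plays in Lemma~\ref{condition}; that argument is specific to valency $5$.

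The missing idea is the elementary one the paper uses: every even graph admits a nowhere-zero $2$-flow, hence a nowhere-zero $3$-flow. Orient each component along an Euler circuit and put the value $1$ on every edge; then in-flow equals out-flow at every vertex. You do write that $\G$ is Eulerian, but you never draw this conclusion. Drawing it disposes of valency $4$ with or without involutions, and of every even valency at once. Your reduction of even $|X|\ge 6$ to $|Y|=4$ is therefore unnecessary.

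What remains is odd valency at least $7$. Here the paper simply quotes two results: a connected $k$-valent Cayley graph is $k$-edge-connected, and (Lov\'asz, Thomassen, Wu and Zhang) every $6$-edge-connected graph has a nowhere-zero $3$-flow. Your alternative for this case is correct and genuinely different:
\begin{enumerate}[(i)]
\item Choose an inverse-closed $Y\subseteq X$ with $|Y|=5$, so that $\cay(G,Y)$ is a parity subgraph.
\item If $\sg{Y}=G$, use minimality first in order and then in valency.
\item If $\sg{Y}<G$ and $p^2\nmid|\sg{Y}|$, then $|\sg{Y}|$ is square-free and Proposition~\ref{derived} applies.
\item If $\sg{Y}<G$ and $p^2\mid|\sg{Y}|$, the cyclicity hypothesis passes to $\sg{Y}$: via Sylow's theorem for the $p$-part and Hall's theorem for the $n$-part. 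Minimality in order then applies.
\end{enumerate}
This route avoids the $6$-edge-connectivity theorem. The price is the extra hereditary check and the change to lexicographic minimality. That change is harmless, since the later lemmas only use minimality of order. With the Eulerian observation added, your proof would be complete.
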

\begin{proof}
If $\G$ has valency $4$, then $\G$ is Eulerian and hence admits a 
nowhere-zero $\Z_3$-flow in a straightforward manner. 
If the valency is larger than $5$, then $\G$ is 
$6$-edge-connected by~\cite{M} and 
therefore, it admits a nowhere-zero $\Z_3$-flow by~\cite{LTWZ}. 
Either case contradicts our assumption that $\G$ does not admit a 
nowhere-zero $\Z_3$-flow. 
\end{proof}

\begin{lem}\label{L2}
The Fitting subgroup $F(G)$ is equal to the Sylow $p$-subgroup of $G$. 
\end{lem}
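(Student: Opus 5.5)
The plan is to use that $G$ is solvable, which holds because $|G|=p^2n$ with $p$ odd and $n$ square-free, so $4\nmid |G|$. Hence $F(G)\neq 1$ and $F(G)=\prod_r O_r(G)$ over the primes $r$ dividing $|G|$. I would prove two claims: (A) $O_q(G)=1$ for every prime $q\mid n$, and (B) $O_p(G)$ has order $p^2$. Together these give $F(G)=O_p(G)=P$, the Sylow $p$-subgroup. Throughout I use that $\G$ is $5$-valent by Lemma~\ref{L1} and that $\G$ is a minimal counterexample.

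Claim (B) is the easier one, assuming (A). By (A) and $F(G)\ne 1$ we get $F(G)=O_p(G)\neq 1$. Suppose $|O_p(G)|=p$. Then $F(G)\cong\Z_p$ is abelian, so $F(G)\le C_G(F(G))$. Proposition~\ref{Fit1} gives the reverse inclusion, so $C_G(F(G))=F(G)$. By Proposition~\ref{N/C}, $G/F(G)$ embeds in $\aut(\Z_p)\cong\Z_{p-1}$, which is abelian. Therefore $G'\le F(G)$, so $|G'|$ divides $p$ and is square-free. Proposition~\ref{derived} then shows that $\G$ admits a nowhere-zero $3$-flow, a contradiction. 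Hence $|O_p(G)|=p^2$, i.e.\ $O_p(G)=P$.

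For (A), suppose $N=O_q(G)\neq 1$ for some prime $q\mid n$. Since $n$ is square-free, $|N|=q$. I split into cases.

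\emph{Case $q=2$.} $N$ is a normal subgroup of order $2$, hence central. If $X\cap N\neq\emptyset$, then $X$ contains a central involution, and Proposition~\ref{c-involution} gives a contradiction.

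\emph{Case $X\cap N=\emptyset$ (any $q$).} I pass to the quotient $\G/N=\cay(G/N,X/N)$. The group $G/N$ has order $p^2(n/q)$ and inherits a cyclic Sylow $p$-subgroup or a cyclic Hall subgroup from $G$. The quotient is connected and $5$-valent, counted with multiplicity.
\begin{itemize}
\item If $X/N$ is a genuine set, minimality of $\G$ gives a nowhere-zero $3$-flow on $\G/N$. Proposition~\ref{quo} lifts it to $\G$.
\item If $X/N$ has repeated elements and some element of order larger than $2$ has multiplicity $1$, I apply Proposition~\ref{multi} and then Proposition~\ref{quo}.
\item The remaining configurations are those where every element of order larger than $2$ in $X/N$ is repeated. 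Since $|X/N|=5$ is odd, $X/N$ must contain an involution of odd multiplicity. I expect to dispose of these either by an Eulerian/parity argument on the quotient, using Proposition~\ref{parity}, or by noting that such a multiset forces $G/N$, hence $G$, to have a normal subgroup of index $2$ with $|X\cap H|=2$, so that Lemma~\ref{condition} applies.
\end{itemize}

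\emph{Case $q$ odd and $X\cap N\neq\emptyset$.} Here $X$ contains $x$ of order $q$ with $\sg{x}=N\trianglelefteq G$. This is the step I expect to be the main obstacle. I would first try to show that $G'$ is square-free, which would finish by Proposition~\ref{derived}. Failing that, I would work with $\G-\{x\text{-edges}\}$, which is $3$-valent. Because the $x$-edges form an even subgraph, namely a union of $q$-cycles on the cosets of $N$, this $3$-valent graph is a parity subgraph. I would then aim to show it is bipartite, or to build the closed-ladder configuration of Proposition~\ref{new-suffi}, exploiting normality of $N$ and the cyclicity hypothesis on the Sylow $p$- or Hall $n$-subgroup.
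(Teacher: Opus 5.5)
Your Claim (B) is sound and is essentially the paper's opening argument. However, Claim (A), which carries the real content of the lemma, is not proved. In both substantive subcases you only list strategies, and those strategies fail as stated.

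\emph{The case $q$ odd with $X\cap N\neq\emptyset$.} Here the configuration that actually occurs is $X=\{a^{\pm1},b^{\pm1},x\}$ with $a\in P$, $b\in N$ and $x$ an involution.
\begin{enumerate}[(1)]
\item $G'$ need not be square-free: if $x$ inverts $P$, then $[a,x]=a^{-2}$, so $P\le G'$.
\item Deleting the edges coming from $b^{\pm1}$ leaves $\cay(G,\{a^{\pm1},x\})$. Its $a$-edges form cycles of odd length $|a|$, so this cubic graph is never bipartite.
\end{enumerate}

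\emph{The residual subcase of $X\cap N=\emptyset$.} Take for instance $X=\{y^{\pm1},(zy)^{\pm1},t\}$ with $1\neq z\in N$.
\begin{enumerate}[(1)]
\item Lemma~\ref{condition} can never apply. For $q$ odd, every index-$2$ subgroup $H$ contains $N$, so $y\in H$ if and only if $zy\in H$. Hence $|X\cap H|\in\{0,1,4\}$, never $2$.
\item The ``Eulerian/parity argument'' is not an argument. The quotient is $5$-valent, and producing a suitable parity subgraph is the whole difficulty.
\end{enumerate}

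What is missing is the right order of steps: you made (B) depend on (A) and thereby threw away the structural information needed for (A).

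\emph{Step 1: $P\trianglelefteq G$.} Prove this before touching $O_q(G)$. If $|F(G)|$ were square-free, $F(G)$ would be cyclic, and your own (B) argument would give $G'\le F(G)$ square-free.

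\emph{Step 2: $X\cap P\neq\emptyset$.} Otherwise $G/P$ has square-free order, and Propositions~\ref{derived} and~\ref{quo} apply.

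\emph{Step 3: use $a\in X\cap P$.} With such an $a$ of odd order available, $X$ becomes rigid. Put $Q=O_q(G)$.
\begin{enumerate}[(1)]
\item If $X\cap Q\neq\emptyset$, then $X=\{a^{\pm1},b^{\pm1},x\}$ with $b\in Q$, and $G=(P\times Q)\rtimes\sg{x}$.
\begin{enumerate}[(a)]
\item For $P$ cyclic, $\cay(\sg{a,x},\{a^{\pm1},x\})$ and $\cay(\sg{b,x},\{b^{\pm1},x\})$ are circular ladders with common rungs, so Proposition~\ref{twoladders} applies.
\item For $P\cong\Z_p\times\Z_p$, the subgroup $\sg{aa^x}$ is normal of order $p$ and misses $X$. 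Hence $G/\sg{aa^x}$ has square-free order.
\end{enumerate}
\item If $X\cap Q=\emptyset$, then $Qa$ has order larger than $2$. Minimality together with Proposition~\ref{multi} forces it to have multiplicity $2$, so $X=\{a^{\pm1},(ba)^{\pm1},x\}$ with $b\in Q$. Again there is a normal $P_1\le P$ of order $p$ with $P_1\cap X=\emptyset$, namely $\sg{a^p}$ or $\sg{aa^x}$.
\end{enumerate}

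Finally, the case $q=2$ needs no quotient argument. Then $O_2(G)$ contains the unique involution of $G$, which must lie in $X$ because $|X|=5$ is odd.
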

\begin{proof}
Let $P$ be a Sylow $p$-subgroup of $G$. 
Assume for the moment that $|F(G)|$ is square-free.  
Then $F(G)$ is a cyclic group and it follows from Proposition~\ref{Fit1} 
that $C_G(F(G))=F(G)$. This implies that $G/F(G)$ is isomorphic to a 
subgroup of $\aut(F(G))$ by Proposition~\ref{N/C}. As the later subgroup is abelian, 
$G^\prime \le F(G)$, in particular, $G^\prime$ has square-free order. 
However, then $\G$ admits a nowhere-zero $3$-flow due to Proposition~\ref{derived}, a contradiction. 
We obtain $P \leq F(G)$, in particular, $P$ is normal in $G$. 

If $X \cap P=\emptyset$, then the Cayley (multi)graph $\cay(G/P,X/P)$ 
admits a nowhere-zero $3$-flow by Proposition~\ref{derived}, and therefore, so does $\G$ by 
Proposition~\ref{quo}. This is impossible, hence there exist 
$a, a^{-1} \in X \cap P$. 

Assume that $P < F(G)$. We finish the proof by showing that this leads to a contradiction.
Then $O_q(G) \ne 1$ for some prime divisor $q$ of $n$.  
For ease of notation, let $Q=O_q(G)$.
Then $Q \cong \Z_q$.  If $q=2$, then $Q \le Z(G)$. This implies 
that $X$ contains a central involution, and 
so $\G$ admits a nowhere-zero $3$-flow by Proposition~\ref{c-involution}. 
This is excluded, so $q > 2$. 

Assume for the moment that $X \cap Q \neq \emptyset$. 
Then 
\[
X=\{a,a^{-1},b,b^{-1},x\},
\]
where $b \in Q$, and $x$ is an involution. 
Observe that $G=(P \times Q) \rtimes \Z_2$. 

We first deal with the case where $P \cong \Z_{p^2}$. 
Then, $P=\sg{a}$ and $a^x=a$ or $a^x=a^{-1}$. 
The graph $\cay(\sg{a,x},\{a,a^{-1},x\})$ is  
circular ladder. On the other hand, 
$b^x=b$ or $b^x=b^{-1}$, and 
$\cay(\sg{b,x},\{b,b^{-1},x\})$ is also a circular ladder.
Proposition~\ref{twoladders} can be applied to $\G$ with 
$\{u_1,u_2\}=\{a,a^{-1}\}$ and $\{v_1,v_2\}=\{b,b^{-1}\}$, and we obtain that 
$\G$ admits a nowhere-zero $3$-flow, a contradiction.

We turn second to the case where $P \cong \Z_p \times \Z_p$. 
Then $a$ and $a^x$ must generate $P$. We obtain $aa^x \ne 1$ and is fixed 
by $x$. Thus, $P_1:=\sg{aa^x}$ is a normal subgroup of $G$ of order $p$. 
Clearly, $P_1 \cap X=\emptyset$. By Proposition~\ref{derived}, 
$\cay(G/P_1,X/P_1)$ admits a nowhere-zero $3$-flow, and hence 
by Proposition~\ref{quo}, so does $\G$, a contradiction. 
We conclude that $X \cap Q=\emptyset$. 

It follows from the minimality of $\G$ and Proposition~\ref{quo} that 
$X/Q$ is a multiset. By Proposition~\ref{multi}, the element $Qa \in X/Q$ 
has multiplicity $2$, hence there exists $b \in Q$ such that 
$ba, (ba)^{-1} \in X$. We deduce that 
$G=(P \times Q) \rtimes \sg{x}$. If $P\cong \Z_{p^2}$, then $P=\sg{a}$. 
Define the subgroup $P_1$ of $P$ as 
$P_1=\sg{a^p}$ if $P \cong \Z_{p^2}$, and $P_1=\sg{aa^x}$ if 
$P \cong \Z_p\times\Z_p$. It follows that $P_1 \lhd G$ and 
$P_1 \cap X=\emptyset$. Applying Propositions~\ref{quo} and 
\ref{derived} to $\cay(G/P_1,X/P_1)$, we obtain that $\G$ admits a nowhere-zero $3$-flow, a contradiction. 
\end{proof}

\begin{lem}\label{L3}
$F(G) \cong \Z_p \times \Z_p$. 
\end{lem}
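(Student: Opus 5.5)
We argue by contradiction. By Lemma~\ref{L2}, $F(G)=P$ is a Sylow $p$-subgroup of order $p^2$, so it suffices to rule out $P\cong \Z_{p^2}$. Assume $P=\sg{c}\cong\Z_{p^2}$ and let $P_1=\sg{c^p}$ be its subgroup of order $p$. Since $P_1$ is characteristic in $P\lhd G$, it is normal in $G$.

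First we pin down the group structure. By Proposition~\ref{Fit1}, $C_G(P)\le P$, so $C_G(P)=P$. By Proposition~\ref{N/C}, $G/P$ embeds in $\aut(\Z_{p^2})\cong\Z_{p(p-1)}$, which is cyclic. Hence $G/P$ is abelian and $G'\le P$. If $|G'|\le p$, then $G'$ has square-free order and Proposition~\ref{derived} gives a contradiction. Therefore $G'=P$. Moreover, $G/P$ has order $n$ coprime to $p$, so it embeds in the cyclic subgroup of order $p-1$ of $\aut(P)$.

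By Lemma~\ref{L1}, $|X|=5$, so $X$ contains an involution $x$. Since $|P|$ is odd, $x\notin P$. The image of $x$ in $G/P$ is a nontrivial involution of $G/P\le\aut(P)$, and the unique involution of the cyclic group $\aut(P)$ is inversion. Hence $c^x=c^{-1}$, and $x$ inverts every element of $P$. For every $u\in P\cap X$ of order greater than $1$, the graph $\cay(\sg{u,x},\{u,u^{-1},x\})$ is therefore a circular ladder whose rungs are the $x$-edges. From the proof of Lemma~\ref{L2} we also know that $X\cap P\ne\emptyset$; fix $a,a^{-1}\in X\cap P$.

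The argument then splits on $X\cap P_1$.

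\emph{Case $X\cap P_1=\emptyset$.} The quotient $\cay(G/P_1,X/P_1)$ is defined. The derived subgroup of $G/P_1$ is $P/P_1$, of prime order, so this quotient admits a nowhere-zero $3$-flow by Proposition~\ref{derived}. By Proposition~\ref{quo}, so does $\G$, a contradiction.

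\emph{Case $X\cap P_1\ne\emptyset$.} Let $b\in X\cap P_1$. If $a$ has order $p^2$, then $X=\{a,a^{-1},b,b^{-1},x\}$. The two pairs $\{a,a^{-1}\}$ and $\{b,b^{-1}\}$ both give circular ladders with rungs the $x$-edges, so Proposition~\ref{twoladders} yields a contradiction.

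The remaining situation, which I expect to be the main obstacle, is when $X\cap P=\{b,b^{-1}\}\subseteq P_1$ and $X=\{b,b^{-1},y,y^{-1},x\}$ with $y\notin P$. Here I would first try to show that $\sg{y,x}$ contains a cyclic subgroup inverted by $x$ with $y$ in it. If $y\in P\sg{x}$ (a generalized dihedral group) this holds, and Proposition~\ref{twoladders} applies to the pairs $\{b,b^{-1}\}$ and $\{y,y^{-1}\}$.

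Otherwise the image of $y$ in $G/P$ has order $r>2$ with $r\mid n$. In that case I would pass to the quotient by a suitable normal subgroup $N$ with $N\cap X=\emptyset$, namely the Hall $n'$-part or the kernel of an appropriate projection of the abelian group $G/P$. The goal is to obtain a multiset quotient that contains an element of order larger than $2$ with multiplicity $1$, namely the image of $b$. Proposition~\ref{multi} would then give a flow on the quotient, and Proposition~\ref{quo} would lift it to $\G$, contradicting minimality.

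If this quotient route fails, the fallback is Lemma~\ref{condition}. One would take the index-$2$ subgroup $H$ of $G$ that contains $P$, $y$ and $y^{-1}$ but not $x$; it exists because $G/P$ is cyclic and the image of $y$ has odd order. Then $|X\cap H|$ would be $4$, not $2$. So this fallback needs a combination with Proposition~\ref{new-suffi}, using the circular ladders $\cay(\sg{b,x},\{b,b^{-1},x\})\cong CL_p$ as the components $\Sigma_i$, for $p$ odd.
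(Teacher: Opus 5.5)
\paragraph{Verdict.} There is a genuine gap. Your reductions before the last case are correct and close to the paper's: $x$ inverts $P$, you quotient by $P_1$ when $X\cap P_1=\emptyset$, and you use two ladders when $X\cap P$ contains an element of order $p^2$ (there the paper instead notes $G=P\sg{x}\cong D_{2p^2}$ and cites \cite{FL}). The gap is the case you flag yourself, $X=\{b,b^{-1},y,y^{-1},x\}$ with $b\in P_1$ and $y\notin P$. Essentially all the content of the lemma lies there, and your proposal does not settle it. The quotient route is not merely uncertain; it is impossible. Every nontrivial normal subgroup $N$ of the solvable group $G$ contains a minimal normal subgroup. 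That subgroup is elementary abelian, so it lies in $F(G)=P\cong\Z_{p^2}$ and equals $P_1$. Hence $b\in N\cap X$ for every $N\neq 1$, and no quotient $\cay(G/N,X/N)$ is even defined. Lemma~\ref{condition} fails when the image of $y$ has odd order, as you note. Saying one should ``combine with Proposition~\ref{new-suffi}'' is not an argument until a subgraph $\Lambda$ is exhibited and conditions (ii)--(iv) are checked.

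\paragraph{What is missing.} First, your claim that the image of $y$ in $G/P$ has odd order is unjustified. If it has even order, the index-$2$ subgroup containing $P$ meets $X$ only in $\{b,b^{-1}\}$, and Lemma~\ref{condition} finishes that case. This is how the paper reduces to $|y|=m$ odd with $n=2m$; it uses that $G$ is Frobenius with kernel $P$, so $|y|$ equals the order of $Py$. The paper then constructs the two subgraphs as follows.
\begin{enumerate}[(1)]
\item Take $\Sigma=\cay(G,\{b,b^{-1},x\})$. Its $pm$ components are prisms $CL_p$ on the left cosets of $\sg{b,x}\cong D_{2p}$.
\item Write $P=\sg{a}$ with $x=at$, where $t$ is the involution of a Hall $n$-subgroup containing $y$. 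Let $\Lambda$ be the union of the left translates by $a^i$, $0\le i\le p-1$, of the $2n$-cycle $1-y-yx-yxy-(yx)^2-\cdots-(yx)^{n-1}-(yx)^{n-1}y-1$ with the $m$-cycles $(yx)^j\sg{y}$, $0\le j\le n-1$, glued on.
\item Parity: $\Sigma$ already contains all $b$- and $x$-edges, and $\Lambda$ adds only whole $y$-cycles. So $\Sigma\cup\Lambda$ is a parity subgraph.
\item Flow: orient the big cycle cyclically and each glued $y$-path parallel to its edge on the big cycle. Put value $2$ on $x$-edges and $1$ elsewhere; this is a nowhere-zero $\Z_3$-flow on $\Lambda$.
\item Condition (iii): an explicit computation of $(yx)^{j'-j}$ shows that two rungs of $\Lambda$ lie in the same coset of $\sg{b,x}$ only if $j'=j+m$, and then $i'$ is determined by $i$. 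So each $\Sigma_i$ contains exactly two rungs of $\Lambda$.
\end{enumerate}
None of this appears in your proposal.

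\paragraph{Two smaller omissions.} Your subcase $y\in P\sg{x}$ is vacuous, since every element of $P\sg{x}\setminus P$ is an involution. Conversely, the case where $X\setminus P$ consists of three involutions is not covered. There $G=\sg{X}\le P\sg{x}\cong D_{2p^2}$, and \cite{FL} applies.
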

\begin{proof}
For ease of notation, let $F=F(G)$. 
By Lemma~\ref{L1}, $|F|=p^2$, hence we have 
$G=F \rtimes H$, where $H$ is a Hall $n$-subgroup of $G$.

Assume to the contrary that $F \cong \Z_{p^2}$. 
By Propositions~\ref{N/C} and~\ref{Fit1}, $H$ is isomorphic to a subgroup of 
$\aut(F)$. As $F \cong \Z_{p^2}$, $\aut(F) \cong \Z_{p^2-p}$, and so 
$H \cong \Z_n$, where $n$ is an even divisor of $p-1$. 
Note that $G$ is a Frobenius group with Frobenius kernel $F$ and 
Frobenius complement $H$. 
For $h \in H$, $h \ne 1_G$, it is easy to show that  
$Fh=\{x^{-1}hx \mid x \in F\}$. This gives $|xh|=|h|$ holds for every 
$x \in F$ and $h\in H, h \ne 1_G$. 

Let $x$ be an involution contained in $X$. Note that $\sg{F,x}$ is a dihedral group of order $2p^2$, which contains all involutions of $G$.   
Let $F_1$ denote the subgroup of $F$ of order $p$. 
As $F_1 \lhd G$, $|F_1 \cap X| \ge 2$ follows from 
Propositions~\ref{quo} and~\ref{derived} and the assumption that 
no nowhere-zero $3$-flow exists in $\G$. 

If $|F \cap X| >2$, then $G=\sg{X}=\sg{F,x}$.
As $\sg{F,x}$ is isomorphic to a dihedral group of order $2p^2$, $\G$ admits a nowhere-zero $3$-flow by~\cite{FL}. 
Thus we may assume that 
\[
F \cap X=F_1 \cap X~\text{and}~|X \setminus F|=3.
\]

If $X \setminus F$ contains only involutions, then 
$\sg{X}=\sg{F,x}$, and we are done again. Therefore, we may write that 
$X \setminus F=\{b,b^{-1},x\}$ for some $b \notin F$ and $|b| > 2$. 
Then $G/F=\sg{Fb,Fx} \cong \Z_{n}$. Let $n=2m$. Note that $m$ is odd and 
$m > 1$. 
Also, $|Fb|=|b|$ and $|Fx|=2$, and we deduce that 
$|b|=m$ or $2m$. Now, it follows from Lemma~\ref{condition} that 
$|b|=m$. 

We may write that $H=\sg{b,c}$, $|c|=2$, and 
\begin{equation}\label{eq:conj}
b^{-1}xb=x^r,~c^{-1}xc=x^{-1},\quad x\in F,
\end{equation}
where $r$ has multiplicative order $m$ modulo $p^2$.

Since $\sg{X}=G$, it follows that the unique involution $x$ in $X$ is of the form $x=ac$, where $F=\sg{a}$, and $X$ can be expressed as 
\[
X=\{a^s,a^{-s},b,b^{-1},x\},~s \in [p^2-1],~s\equiv 0\!\!\!\!\pmod p.
\]

We finish the proof of the lemma by defining two subgraphs $\Sigma$ and $\Lambda$ of $\G$, and then 
verifying that these satisfy conditions (i)--(iv) in Proposition~\ref{new-suffi}.

Define $\Sigma$ as 
\[
\Sigma=\cay(G,\{a^s,a^{-s},x\}).
\]

The definition of $\Lambda$ requires some preparation. 
Let $L=\sg{bx}$. Then $|L|=|bx|=|bac|=|bc|=n$. Define the 
$2n$-cycle $\Lambda_0$ as 
\[
\Lambda_0 = 1 - b - bx - bxb - (bx)^2 - \cdots - (bx)^{n-1} - (bx)^{n-1}b - 1. 
\]
For $0 \le j \le n-1$, define the $m$-cycle $C_j$ as 
\[
C_{j}= (bx)^j - (bx)^jb - (bx)^jb^2 - (bx)^jb^3 - \cdots - (bx)^jb^{m-1} - (bx)^j.
\]
Then glue all cycles $C_j$ to $\Lambda_0$, and denote by 
$\Lambda_0^{\ast}$ the obtained graph, see Figure~\ref{fig}. 

\begin{figure}[t]
\begin{center}
\begin{tikzpicture}[scale=1.5] ---
\fill (0,0) circle (1pt);
\fill (1,0) circle (1pt);
\fill (2,0) circle (1pt);
\fill (3,0) circle (1pt);
\fill (4,0) circle (1pt);
\fill (5,0) circle (1pt);
\fill (6,0) circle (1pt);
\fill (7,0) circle (1pt);
\draw (0,0) node[below]{\footnotesize $1$};
\draw (1,0) node[below]{\footnotesize $b$};
\draw (2,0) node[below]{\footnotesize $bx$};
\draw (3,0) node[below]{\footnotesize $bxb$};
\draw (4,0) node[below]{\footnotesize $(bx)^2$};
\draw (5,0) node[below]{\footnotesize $(bx)^{n-2}b$};
\draw (6,0) node[below]{\footnotesize $(bx)^{n-1}$};
\draw (7,0) node[below]{\footnotesize $(bx)^{n-1}b$};
\draw[->] (0,0) -- (1,0);
\draw[->, dashed] (1,0) -- (2,0); 
\draw[->] (2,0) -- (3,0);
\draw[->, dashed] (3,0) -- (4,0);
\draw[->] (4,0) -- (4.2,0);
\draw[dotted] (4.2,0) -- (4.8,0);
\draw[->] (4.8,0) -- (5,0);
\draw[->, dashed] (5,0) -- (6,0);
\draw[->] (6,0) -- (7,0);
\draw[bend left=-20, dashed] (0,0) to (7,0); 
\draw (0,0) arc[start angle=180, end angle=0, radius=0.5];
\draw (0.5,0.5) node[above]{\footnotesize $C_0$};
\draw (2,0) arc[start angle=180, end angle=0, radius=0.5];
\draw (2.5,0.5) node[above]{\footnotesize $C_1$};
\draw (6,0) arc[start angle=180, end angle=0, radius=0.5];
\draw (6.5,0.5) node[above]{\footnotesize $C_{n-1}$};
\draw (4,0) arc[start angle=180, end angle=135, radius=0.5];
\draw (5,0) arc[start angle=0, end angle=45, radius=0.5];
\draw[->] (0.45,0.5) -- (0.55,0.5);
\draw[->] (2.45,0.5) -- (2.55,0.5);
\draw[->] (6.45,0.5) -- (6.55,0.5);
\draw[->] (3.55,-0.7) -- (3.45,-0.7);
\end{tikzpicture}
\caption{The subgraph $\Lambda_0^{\ast}$ and its orientation $D^{\ast}$; the $x$-edges are drawn as dashed lines.}
\label{fig}
\end{center}
\end{figure}

For $g \in G$, the \emph{left translation} $\lambda_g$ is the permutation of $G$, defined as 
$\lambda_g(x)=gx$ ($x \in G$). It is easy to see that $\lambda_g \in \aut(\G)$; 
and therefore, $\lambda_{g}(\Lambda_0^{\ast})$ is a subgraph of $\G$ for every $g \in G$. Finally, define the subgraph $\Lambda$ as 
\[
\Lambda=\bigcup_{i=0}^{p-1}\lambda_{a^i}(\Lambda_0^{\ast}). 
\]

We go through the conditions in Proposition~\ref{new-suffi} case by case.

\medskip
\noindent $\bullet$ (i) \textit{$\Sigma$ is a spanning subgraph with components $\Sigma_1,\ldots,\Sigma_k$, and  
there exists an odd integer $t > 1$ such that $\Sigma_i \cong CL_t$ 
for each $i \in [k]$ (hence $|G|=2tk$).}

\medskip
Clearly, $\Sigma$ is a spanning subgraph. 
Each component of $\Sigma$ is isomorphic to the graph 
$\cay(D,\{a^s,a^{-s},x\}) \cong CL_p$, where $D=\sg{a^p,x}$. 
There are $pm$ components and we list them as $\Sigma_1,\ldots,\Sigma_{pm}$ 
(hence $k=pm$).

\medskip
\noindent $\bullet$ (ii) \textit{$\Sigma \cup \Lambda$ is a parity subgraph of $\G$.}  

\medskip
Every vertex of $\Sigma \cup \Lambda$ has valency $3$ or $5$, because 
$\Sigma$ is a spanning subgraph, which covers all $a^s$- and 
$x$-edges of $\G$, and for every $g \in G$, if $g \in \vertex(\Lambda)$, then both $b$-edges 
of $\G$ incident with $g$ belong to $\edge(\Lambda)$.   

\medskip
\noindent $\bullet$ (iii) \textit{For each $i \in [pm]$, 
$|\edge(\Lambda) \cap \edge(\Sigma_i)| \in \{1,2\}$ and 	
every edge in $\edge(\Lambda) \cap \edge(\Sigma_i)$ is a rung of $\Sigma_i$.}

\medskip
The second part of the statement follows directly from the definitions of 
$\Sigma$ and $\Lambda$. The components of $\Sigma$ are induced by the cosets $gD$, $g \in G$. 
The $x$-edges of $\Lambda$ can be listed as 
\[
\{a^i(bx)^j,a^i(bx)^jx\},\quad 0 \leq i \leq p-1,~0 \leq j \leq n-1.
\]

We claim that these edges are pairwise distinct. For otherwise, 
\[
a^i(bx)^j=a^{i'}(bx)^{j'}x^\varepsilon,\quad (i,j) \neq (i',j'),~ 
\varepsilon \in \{0,1\}.
\]
This implies $Fb^jc^j=Fb^{j'}c^{j'+\varepsilon}$. 
We thus obtain, successively, $j=j'$, $\varepsilon=0$, and hence 
$i=i'$, a contradiction.

We show that the above edges are distributed equally among the $pm$ components of $\Sigma$. 
Let $e$ and $e'$ be two distinct $x$-edges of $\Lambda$, such that 
\[
e=\{a^i(bx)^j,a^i(bx)^jx\}, \quad e'=\{a^{i'}(bx)^{j'},a^{i'}(bx)^{j'}x\}.
\] 
Observe that $e$ and $e'$ belong to the same component of 
$\Sigma$ if and only if $a^i(bx)^j D=a^{i'}(bx)^{j'} D$. Equivalently, 
\begin{equation}\label{eq:D-coset-1}
(bx)^{-j}a^{i'-i}(bx)^j (bx)^{j'-j}~\in~\sg{a^p,ac}.
\end{equation}
If $j'=j$, then $a^{i'-i} \in \sg{a^p}$, implying that $i'=i$ (recall that $0 \leq i,i' \leq p-1$). 
This contradicts the assumption that $e \ne e'$.
We may assume that $j' > j$. Using~\eqref{eq:conj} and that $x=ac$, it is a routine task to deduce that 
\begin{align*}
(bx)^{-j}a^{i'-i}(bx)^j &= a^{(-r)^j(i'-i)}, \\ 
(bx)^{j'-j} &= (bc)^{j'-j}a^{-1+r-r^2+\cdots+(-1)^{j'-j}r^{j'-j-1}}.
\end{align*}
This shows that $b^{j'-j}=1$, hence $j'=j+m$. As $m$ is odd, $(bc)^{j'-j}=c$, and so  
the condition in \eqref{eq:D-coset-1} reduces to 
\[
a^{(-r)^{j+1}(i'-i)+r-r^2+\cdots+r^{m-2}-r^{m-1}} \in \sg{a^p}.
\]
As $0 \leq i, i' \leq p-1$, this shows that $i'$ is uniquely determined by $i$. 
We conclude that $|\edge(\Lambda) \cap \edge(\Sigma_i)|=2$ 
for every $i \in [pm]$. 

\medskip
\noindent $\bullet$ (iv) \textit{$\Lambda$ admits a $\Z_3$-flow $(D,\varphi)$ satisfying the following conditions.
\begin{enumerate}[(a)]
\item $\Null(\varphi) \subseteq \edge(\Sigma)$. 
\item For every $i \in [pm]$, 
$\edge(\Lambda) \cap \edge(\Sigma_i) \not\subset \Null(\varphi)$.
\end{enumerate}}

\medskip
Define the orientation $D^{\ast}$ of $\Lambda_0^{\ast}$ by orienting 
$\Lambda_0$ clockwise, and for every $0 \leq j \leq n-1$, the subgraph  
$C_j \setminus (\edge(\Lambda_0) \cap \edge(C_j))$ also clockwise, see Figure~\ref{fig}.  
Then define the function $\varphi^{\ast} \colon \edge(\Lambda_0^{\ast}) \to \Z_3$, 
\[
\varphi^{\ast}(e)=\begin{cases}
2 & \text{ if $e$ is an a $x$-edge}, \\ 
1 & \text{otherwise}.
\end{cases}
\]
It can be checked quickly that $(D^{\ast},\varphi^{\ast})$ is a nowhere-zero $\Z_3$-flow of 
$\Lambda_0^{\ast}$.  Furthermore, as $\Lambda$ is $p$ disjoint copies of $\Lambda_0^{\ast}$, $(D^{\ast},\varphi^{\ast})$ extends to a nowhere-zero $\Z_3$-flow $(D,\varphi)$ 
of $\Lambda$. Clearly, $(D,\varphi)$ satisfies the conditions in both (a) and (b).  
\end{proof}

In the next lemma we show that $G$ is isomorphic to a 
subgroup of $\AGL_1(p^2)$ provided that 
$F(G)$ is a minimal normal subgroup of $G$. 
The group $\AGL_1(p^2)$ consists of the 
\emph{affine transformations} of the form 
$x \mapsto rx+a$, where $r,a \in \F_{p^2}$, $r \ne 0$. In what follows, we denote this transformation by $[r,a]$. For $[r,a],[s,b] \in \AGL_1(p^2)$, we have 
\[
[r,a][s,b]=[rs,a+rb]. 
\]
Let $\sigma$ denote the 
\emph{Frobenius automorphism} of $\F_p^2$, i.\,e., 
$\sigma(x)=x^p$, where $x \in \F_{p^2}$. 
The group $\AGaL_1(p^2)$ is defined as  
\[
\AGaL_1(p^2)=\sg{\AGL_1(p^2), \sigma}=\AGL_1(p^2)\sg{\sigma}.
\]

\begin{lem}\label{L4}
If $F(G)$ is a minimal normal subgroup of $G$, then 
$G$ is isomorphic to a subgroup of $\AGL_1(p^2)$.
\end{lem}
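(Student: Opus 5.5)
The plan is to realize $G$ as a primitive permutation group and apply Proposition~\ref{primitive}, then remove the field automorphism part. By Lemma~\ref{L3}, $F:=F(G)\cong \Z_p\times\Z_p$, and by Lemma~\ref{L2}, $G=F\rtimes H$ with $H$ a Hall $n$-subgroup. By Propositions~\ref{N/C} and~\ref{Fit1}, $C_G(F)=F$, so $H$ acts faithfully on $F$ by conjugation. We let $G$ act on the coset space $\Omega=G/H$, which can be identified with $F$ via the affine action. The stabilizer of the point $H$ is $H$ itself. Since $F$ is a minimal normal subgroup and $H$ acts irreducibly on $F$, the subgroup $H$ is maximal in $G$. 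Indeed, any $H<K\le G$ has $K=(K\cap F)H$, and $K\cap F$ is an $H$-invariant subgroup, hence $F$-normal because $F$ is abelian. So the action is primitive, and it is faithful because the core of $H$ centralizes $F$.

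To verify condition (ii) of Proposition~\ref{primitive}, we need an abelian normal subgroup $A\lhd H$ such that $F$ is a minimal normal subgroup of $FA$, that is, $A$ acts irreducibly on $F$. Since $n$ is square-free, $H$ is metacyclic (a Z-group), and under the hypothesis of the theorem $H$ is cyclic or $F$ is not cyclic. Here $F$ is not cyclic, so we must use the structure of $H$ directly. I would take $A$ to be a maximal abelian normal subgroup of $H$ containing $H'$; for instance $A=C_H(H')$ or the Fitting subgroup of $H$, which is cyclic.

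The key point is irreducibility of $A$ on $F$. Suppose $A$ is reducible. By Clifford's theorem, $F$ restricted to $A$ is a sum of two one-dimensional $A$-modules. Then $A$ embeds in a split torus $\Z_{p-1}\times\Z_{p-1}$, and $H/A$ either fixes or permutes these two lines. This gives an index-$\le 2$ subgroup of $H$ that stabilizes a line $F_1$ of order $p$. I expect this to be the main obstacle. The cleanest route is to show that in this reducible case the subgroup $H$, being square-free, is small enough that $G'$ is contained in something of square-free order, or else that $F_1\cap X=\emptyset$ for some $H$-invariant line. In the latter case Propositions~\ref{quo} and~\ref{derived}, together with minimality, give a contradiction, as in Lemma~\ref{L2}. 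Alternatively, I would argue purely group-theoretically: a square-free-order irreducible subgroup of $\mathrm{GL}_2(p)$ is contained in the normalizer of a Singer cycle $\Z_{p^2-1}\rtimes\Z_2$. Its intersection $A$ with the Singer cycle $\Z_{p^2-1}$ is then a cyclic normal subgroup of index $\le 2$. If this $A$ were reducible, $H$ would be monomial of order dividing $2(p-1)^2$, and square-freeness would force $H$ to be abelian with a nontrivial normal line, contradicting minimality of $F$.

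Once Proposition~\ref{primitive} applies, $G\le \AGaL_1(p^2)$ with $F$ the translation group and $H$ embedded in $\Gamma L_1(p^2)=\F_{p^2}^{*}\rtimes\sg{\sigma}$. It remains to show $H\le \F_{p^2}^{*}$, that is, $H$ contains no element inducing $\sigma$ modulo the multiplicative part. If $H\not\le\F_{p^2}^*$, then $H\cap \F_{p^2}^{*}$ has index $2$ in $H$, so $X$ contains an involution of the form $[r,a]\sigma$ (since $G/(F\rtimes(H\cap\F_{p^2}^*))\cong\Z_2$ and $\sg{X}=G$). I would then rule this out by the same ladder technique as in Lemma~\ref{L2}: such an involution centralizes a subgroup of order $p$ in $F$ (the elements fixed by $\sigma$, up to conjugation). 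Combining this with $X\cap F\neq\emptyset$ either yields two closed ladders with common rungs, so that Proposition~\ref{twoladders} applies, or gives an index-two subgroup meeting $X$ in two elements, so that Lemma~\ref{condition} applies. Either outcome contradicts the choice of $\G$ and leaves $G\le\AGL_1(p^2)$.
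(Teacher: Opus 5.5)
\textbf{There is a genuine gap: you misread the hypothesis, and without it the lemma is false.} Your frame is the paper's own, and your primitivity and faithfulness arguments are fine. But the theorem assumes that the Sylow $p$-subgroup \emph{or} the Hall $n$-subgroup is cyclic. Since $F\cong\Z_p\times\Z_p$ is not cyclic (Lemma~\ref{L3}), $H$ must be cyclic, $H=\sg{h}\cong\Z_n$. Your reading, ``$H$ is cyclic or $F$ is not cyclic'', is vacuous here, and it sends you after an arbitrary square-free $H$.

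Cyclicity is exactly what makes the paper's proof short:
\begin{enumerate}[(1)]
\item Condition (ii) of Proposition~\ref{primitive} holds with $A=H$ itself, since $H$ is abelian and $F$ is minimal normal in $FH=G$.
\item Once $G\le\AGaL_1(p^2)$ with $h(0)=0$, the case $h=[r,0]\sigma$ is excluded by a one-element computation. Irreducibility of $\sg{h}$ on $F$ forces $r\notin\F_p$. In the basis $1,r$ the characteristic polynomial of $h$ is $x^2-r^{p+1}$, so $r^{p+1}$ is a nonsquare in $\F_p$ and has even order.
\item Since $h^2=[r^{p+1},0]$, this gives $4\mid |h|=n$, contradicting square-freeness.
\end{enumerate}

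\textbf{Your substitute claims fail for non-cyclic $H$, and so does the conclusion.} Take $p=7$ and $H=\sg{\mathrm{diag}(2,4),\tau}\cong S_3\le\mathrm{GL}_2(7)$, where $\tau$ swaps coordinates.
\begin{itemize}
\item $|H|=6$ is square-free, and $H$ is irreducible, monomial and non-abelian.
\item Its only nontrivial abelian normal subgroup $\sg{\mathrm{diag}(2,4)}$ is reducible, so no admissible $A$ exists.
\item $H$ lies in no Singer-cycle normalizer. A non-scalar element with eigenvalues in $\F_p$ lies in no Singer cycle $C$, and elements of $N(C)\setminus C$ have even order.
\item For $G=\Z_7^2\rtimes H$, $F(G)$ is minimal normal. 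Yet $G$ is not isomorphic to a subgroup of $\AGL_1(49)$, because every subgroup of order $294$ there contains the translations and has cyclic quotient by them.
\end{itemize}
Similarly, $\sg{[\zeta,0],\sigma}\cong S_3$ with $\zeta\in\F_{25}^{*}$ of order $3$ acts irreducibly. It shows that the semilinear part cannot be removed group-theoretically for non-cyclic $H$.

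\textbf{Your graph-theoretic fallback for the $\sigma$-step does not close it.} The involution $t\in X$ need not normalize $\sg{a}$ or $\sg{b}$, so closed ladders with $t$-rungs are not available for Proposition~\ref{twoladders}. Also, the index-two subgroup $F(H\cap\F_{p^2}^{*})$ meets $X$ in four elements when $b$ lies in it, so Lemma~\ref{condition} does not apply. Making this work would amount to proving the main theorem for groups outside its hypotheses. The fix is simply to use that $H$ is cyclic, after which the paper's short argument goes through.
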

\begin{proof}
Recall that $G=F(G) \rtimes H$, where $F(G) \cong \Z_p\times \Z_p$ and 
$H \cong \Z_n$. 
Denote by $G/H$ the set of right cosets of $H$ in $G$ and consider the 
the action of $G$ on $G/H$, defined as $(Hx)^g=Hxg$, where $x,g \in G$. Note that $H$ contains no nontrivial normal subgroup of $G$, and therefore, the induced permutation group on $G/H$ is isomorphic to $G$. 
For the coset $\alpha=H$, $G_\alpha=H$. As $F(G)$ is a minimal normal abelian subgroup, it follows that $H$ is maximal in $G$, and therefore, $G$ is primitive on $G/H$.  All conditions in Proposition~\ref{primitive} hold for $G$, 
where $\Omega=G/H$ and $N=F(G)$, and this gives that $G$ is isomorphic to a subgroup of $\AGaL_1(p^2)$. We write 
$G \leq \AGaL_1(p^2)$. 

Let $H=\sg{h}$. We may assume that $h \in \AGaL_1(p^2)$ and $h(0)=0$. 
Then $h=[r,0]$ or $h=[r,0]\sigma$. 
If $h=[r,0]$, then $G \leq \AGL_1(p^2)$, as claimed. 
Suppose that $h=[r,0]\sigma$. We show below that this case cannot occur. 

Consider the $\F_p$-linear map of $\F_p^2$
corresponding to $h$ and denote by $f(x)$ its characteristic polynomial.  
Since $F(G)$ is a minimal normal subgroup in $G$, it follows that 
$f(x)$ is irreducible over $\F_p$. Consequently,  
$r \in \F_{p^2} \setminus \F_p$. For otherwise, $f(x)$ would vanish at 
$r$, contradicting its irreducibility. Thus, $1,r$ form a basis of $\F_p^2$. 
We compute $h(1)=r$, $h(r)=r^{p+1}$, and so that $f(x)=x^2-r^{p+1}=0$. 
This implies that $r^{p+1}$ is a nonsquare 
element of $\F_p$. Therefore, as an element of the multiplicative subgroup 
of $\F_p$, $|r^{p+1}|$ is even. Since $h^2=[r^{p+1},0]$, this shows that 
$n=|h|$ is divisible by $4$. This contradicts our assumption that 
$n$ is square-free. 
\end{proof}

We are ready to derive the final contradiction.  

\begin{lem}\label{L5}
$\G$ admits a nowhere-zero $3$-flow. 
\end{lem}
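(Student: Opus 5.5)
We are in the final step. By Lemmas~\ref{L1}--\ref{L3}, $\G$ is $5$-valent, $F:=F(G)\cong\Z_p\times\Z_p$ is the Sylow $p$-subgroup, and $G=F\rtimes H$ with $H$ a Hall $n$-subgroup. Since the Sylow $p$-subgroup is not cyclic, the hypothesis of Theorem~\ref{main} forces $H\cong\Z_n$. The first step is to reduce to the case where $F$ is a minimal normal subgroup of $G$. If $F$ contains a normal subgroup $F_1$ of order $p$, then minimality of the counterexample together with Propositions~\ref{quo} and~\ref{derived} forces $|X\cap F_1|\ge 2$; otherwise $\cay(G/F_1,X/F_1)$ already has a flow. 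In fact we only need that $G/F_1$ has derived subgroup of order dividing $pn$, which is square-free, so Proposition~\ref{derived} applies whenever $X\cap F_1=\emptyset$.

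In that situation I write $X=\{a,a^{-1},b,b^{-1},x\}$ with $a\in F_1$ and $x$ an involution, and split into cases.
\begin{itemize}
\item If $b\in F$, then $G=\sg{F,x}$ is a generalized dihedral group or has a central involution, and~\cite{LL} or Proposition~\ref{c-involution} applies.
\item If $b\notin F$ and $Fb$ has odd order, then $\sg{F,b}$ has index $2$. Since $|X\cap\sg{F,b}|\ge 2$, either this intersection has size exactly $2$ and Lemma~\ref{condition} applies, or it has size $4$ and $x$ is the only element outside.
\item In the remaining cases I will try Proposition~\ref{twoladders}. Here $x$ normalizes $F_1=\sg{a}$ (acting by $\pm1$), so $\cay(\sg{a,x},\{a,a^{-1},x\})$ is a closed ladder. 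A second ladder is needed from $b$, or from a quotient. Failing that, I would mimic the $\Sigma,\Lambda$ construction of Lemma~\ref{L3} via Proposition~\ref{new-suffi}, taking $\Sigma=\cay(G,\{a,a^{-1},x\})$ with components $CL_p$.
\end{itemize}

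Once $F$ is minimal normal, Lemma~\ref{L4} gives $G\le\AGL_1(p^2)$. Write $H=\sg{h}$ with $h=[r,0]$, where $r$ has order $n$ in $\F_{p^2}^*$, so $n\mid p^2-1$. Since $n$ is square-free and $G$ needs an involution in $X$ (the valency $5$ is odd), $n$ is even, and the unique involution of $H$ is $[-1,0]$. Every involution of $G$ is therefore $[-1,c]$, which inverts all of $F$. So $\sg{F,x}$ is generalized dihedral of order $2p^2$, and $x$ normalizes every subgroup of $F$.

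Now take $X=\{u,u^{-1},v,v^{-1},x\}$. If $u,v\in F$, then $\G$ is a Cayley graph on the generalized dihedral group $\sg{F,x}$ and~\cite{LL} applies. If exactly one of them lies in $F$, say $u\in F$, then $\sg{u,x}$ is dihedral and $\cay(\sg{u,x},\{u,u^{-1},x\})$ is a circular ladder. In this case I need a second ladder with rung $x$, which requires $v^x\in\{v,v^{-1}\}$. Since $G/F$ is abelian, $v=[s,d]$ with $s\ne\pm1$, and $x$ conjugates $v$ to $[s,\ast]$, so this fails. Instead, since $\sg{F,v}$ has odd index $\ldots$ I would apply Lemma~\ref{condition} when $|Fv|$ is odd, because then $|X\cap\sg{F,v}|=4$ or $2$. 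When $|Fv|$ is even, I would use the Proposition~\ref{new-suffi} construction exactly as in Lemma~\ref{L3}, with $\Sigma=\cay(G,\{u,u^{-1},x\})$ (components $CL_p$, $p$ odd) and $\Lambda$ built from the cycles $(vx)^j$ and $v$-cycles. If neither $u$ nor $v$ lies in $F$, then the quotient by $F$ is a multigraph on the cyclic group $G/F$, and Proposition~\ref{multi} should handle it unless $Fu=Fv^{\pm1}$. In that case $uv^{-1}\in F$, and I would again build the ladder $\Sigma=\cay(G,\{u,u^{-1},x\})$ or apply Proposition~\ref{new-suffi}.

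The main obstacle is the Proposition~\ref{new-suffi} construction in the even-order case. There one has to verify condition (iii), namely that the rungs of $\Lambda$ distribute one or two per component of $\Sigma$. This amounts to a coset computation in $\AGL_1(p^2)$ analogous to \eqref{eq:D-coset-1}, and I would carry it out with the explicit multiplication $[r,a][s,b]=[rs,a+rb]$.
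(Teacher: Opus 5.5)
Your outline has the same structure as the paper's proof: split on whether $F$ is minimal normal, then use Lemma~\ref{L4}, Lemma~\ref{condition} and the $\Sigma,\Lambda$ construction. However, there is a genuine gap: in Case~2 the decisive subcase is assigned to the wrong tool, and that case is never actually handled.

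\textbf{The parity is reversed in Case~2.} Since $G'=F$ and $G/F$ is cyclic, the only subgroup of index $2$ in $G$ is $K=F\rtimes\sg{h^2}$. Take $u\in F$.
\begin{itemize}
\item If $|Fv|=m$ is odd, then $v\in K$, so $|X\cap K|=4$ and Lemma~\ref{condition} gives nothing.
\item If $|Fv|=2m$, then $v\notin K$, so $|X\cap K|=2$ and Lemma~\ref{condition} finishes at once.
\end{itemize}
You have these reversed. The case you dismiss by Lemma~\ref{condition} is exactly the one that needs Proposition~\ref{new-suffi}, and it is the heart of the lemma.

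\textbf{The construction needs a new ingredient.} In that case ``the coset computation'' is not routine bookkeeping. After conjugating (by Hall's theorem) to $v=[r,0]$, $x=[-1,b]$, $u=[1,a]$, the $p$ translates of $\Lambda_0^{\ast}$ must be taken along a subgroup $\sg{[1,c]}$ of order $p$ none of whose $H$-conjugates $\sg{[1,(-r)^jc]}$ equals $\sg{[1,a]}$. Otherwise condition (iii) fails. For example, translating along $\sg{u}$ puts all $p\ge 3$ rungs $\{u^i,u^ix\}$ into the component of $\Sigma$ through $1$. Such a $c$ exists because the image of $H$ acting by conjugation on the $p+1$ subgroups of order $p$ has odd order: $[-1,0]$ acts trivially and $|H|/2$ is odd. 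A group of odd order cannot be transitive on a set of even size. In Lemma~\ref{L3} the translates $a^i$ are forced, so this choice does not arise there; it is the new ingredient here, and your proposal does not have it.

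\textbf{The non-minimal case is left open.} Your second and third bullets end without an argument. The missing idea is Maschke's theorem.
\begin{itemize}
\item Since $p\nmid |H|$, $F_1$ has an $H$-invariant complement $F_2$, which is normal in $G$.
\item Your own quotient argument applied to $F_2$ gives $X\cap F_2\neq\emptyset$.
\item Hence $X\cap F=\{a^{\pm1},b^{\pm1}\}$ with $a\in F_1$, $b\in F_2$, and the cases $b\notin F$ never arise.
\item Since $x$ normalizes both $\sg{a}$ and $\sg{b}$, Proposition~\ref{twoladders} applies; this is the paper's argument.
\end{itemize}
Your first bullet is also inaccurate as stated. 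If $x$ acts on $F$ with eigenvalues $1$ and $-1$, then $\sg{F,x}\cong\Z_p\times D_{2p}$ is neither generalized dihedral nor has a central involution. In that case $G'$ has order $p$, so Proposition~\ref{derived} still works.

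\textbf{A smaller point in Case~2.} When $X\cap F=\emptyset$, there is no exceptional case $Fu=Fv^{\pm1}$ and no need for Proposition~\ref{multi}. Since $G/F$ is cyclic, Propositions~\ref{quo} and~\ref{derived} apply directly to the multigraph $\cay(G/F,X/F)$.
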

\begin{proof}
By Lemmas~\ref{L1}-\ref{L3}, we may assume that  
$\G$ is $5$-valent, $G=F(G) \rtimes H$, where $F(G) \cong 
\Z_p\times\Z_p$, and $H \cong \Z_n$. For an ease of notation, let $F=F(G)$. 
We distinguish two cases according to whether $F$ is a minimal 
normal subgroup of $G$.

\medskip 
\noindent{\bf Case~1.} 
{\it $F$ is not a minimal normal subgroup of $G$.}

\smallskip
By Maschke theorem (see, e.\,g.,~\cite[Theorem~1.17.7]{Hbook}), $F$ decomposes as $F=F_1 \times F_2$, 
where both $F_1$ and $F_2$ are normal in $G$ of order $p$. 

If $F_1 \cap X \neq \emptyset$ and $F_2 \cap X \neq \emptyset$, then 
we can use the argument in the second-to-last paragraph in the proof of 
Lemma~\ref{L2} to deduce a nowhere-zero $\Z_3$-flow in $\G$.  

If $F_i \cap X=\emptyset$ and $i \in \{1,2\}$, then 
$\cay(G/F_i,X/F_i)$ admits a nowhere-zero $3$-flow by Proposition~\ref{derived}, and therefore, 
so does $\G$ by Proposition~\ref{quo}. 

\medskip 
\noindent{\bf Case~2.} 
{\it $F$ is a minimal normal subgroup of $G$.}

\medskip
By Lemma~\ref{L4}, $G\leq \AGL_1(p^2)$. Thus, $G=F \rtimes H$, where $H=\sg{h}$, $h=[-r,0]$, as an element of the multiplicative group of $\F_{p^2}$, $|r|=n/2$. 

If $X \cap F=\emptyset$, then $\G$ admits a nowhere-zero $3$-flow by 
Propositions~\ref{quo} and \ref{derived}. 
We assume that $|X \cap F| \geq 2$. If $X \setminus F$ contains only involutions, then 
$G=\sg{X}$ is isomorphic to the generalized dihedral group 
$\mathrm{Dih}(\Z_p\times \Z_p)$, hence $\G$ admits a nowhere-zero $3$-flow by {\cite[Theorem~1]{LL}}. Thus, we may assume that $|X \cap F|=2$ and 
$X \setminus F$ contains an element $y$ of order larger then $2$. 
It follows that $|y|=m$ or $2m$. By Lemma~\ref{condition}, we may also assume that $|y|=m$.  
By Hall theorem, $y$ is conjugate to $[r,0]$. Consequently, $\G \cong \G^\prime:=\cay(G,X')$, 
where 
\[
X'=\big\{~[1,a],[1,-a],[r,0],[r^{-1},0],x~\big\},\quad x=[-1,b],~a,b \in \F_{p^2},~a \ne 0.
\]

\medskip
We copy the proof of Lemma~\ref{L3} and define subgraphs $\Sigma$ and $\Lambda$ of 
$\G^\prime$ and 
show that these satisfy conditions (i)--(iv) in Proposition~\ref{new-suffi}. 

Define  
\[
\Sigma=\cay\big(G,\big\{~[1,a],[1,-a],x~\big\}\big).
\]

Let $L=\sg{[r,0]x}=\sg{[-r,rb]}$. As $|r|=m$ is odd, $r \ne -1$. A direct computation yields 
$[1,-\frac{rb}{r+1}][-r,rb][1,\frac{rb}{r+1}]=[-r,0]$, implying 
that $|L|=|[-r,0]|=n$. Let $\Lambda_0^{\ast}$ be the 
$2n$-cycle, defined as
\[
\Lambda_0 = 1 - [r,0] - [-r,rb] - [-r,rb][r,0] - [-r,rb]^2 -~ \cdots~ - 
[-r,rb]^{n-1} - [-r,rb]^{n-1}[r,0] - 1.
\]
Define $\Lambda_0^*$ to be the graph obtained by gluing the $m$-cycles $C_j$ to 
$\Lambda_0$, where $0 \leq j \leq n-1$, and 
\[
C_j= ([-r,rb])^j - ([-r,rb])^j[r,0] - ([-r,rb])^j[r,0]^2 -~\cdots~ - 
([-r,rb])^j[r,0]^{m-1} - ([-r,rb])^j.
\]

Consider the action of $H$ on the set of subgroups of $F$ of order $p$ by 
conjugation and denote 
by $\bar{H}$ the image of $H$. The the image of 
the subgroup $\sg{[1,a]}$ under the element $h^j \in H$ is 
computed as 
\[
[1,a]^{h^j}=\big\langle~h^{-j}[1,a]h^{j}~\big\rangle=\sg{~[1,(-r)^{-j}a]~}.
\]

Notice that $|\bar{H}|$ is odd because 
$[-1,0]$ is in the kernel of the action and $|H|/2=m$ is odd. 
Using also the fact that $F$ has $p+1$ subgroups of order $p$, we obtain that $\bar{H}$ is intransitive, and therefore, there exists $c \in \F_p^2$ that satisfies
\begin{equation}\label{eq:cap}
\sg{~[1,a]~}~\cap~\bigcup_{i=0}^{n-1}\sg{~[1,(-r)^jc]~}=\{1_G\}.
\end{equation}
Finally, define 
\[
\Lambda=\bigcup_{i \in \F_p}^{p-1}\lambda_{[1,ic]}(\Lambda_0^{\ast}), 
\]

We show that $\Sigma$ and $\Lambda$ satisfy conditions (i)--(iv) in Proposition~\ref{new-suffi}. Let $D=\sg{[1,a],[-1,b]}$. Then, $|D|=2p$, hence $\Sigma$ has $pm$ components. Let $\Sigma_i$, $i \in [pm]$, denote these 
components. It is easy to see that the argument used in the proof of Lemma~\ref{L3} can be directly copied in the case of conditions (i), (ii) and (iv), and therefore, we discuss only condition~(iii).

\medskip
\noindent $\bullet$ (iii) \textit{For each $i \in [pm]$, 
$|\edge(\Lambda) \cap \edge(\Sigma_i)| \in \{1,2\}$ and 	
every edge in $\edge(\Lambda) \cap \edge(\Sigma_i)$ is a rung of $\Sigma_i$.}

\medskip
The second part of the statement follows directly from the definitions of 
$\Sigma$ and $\Lambda$. 
The $x$-edges of $\Lambda$ can be listed as 
\[
\{~[1,ic][-r,rb]^j,[1,ic][-r,rb]^jx~\},\quad 0 \leq i \leq p-1,~
0 \leq j \leq n-1.
\]
We show that these are distributed equally among the $pm$ components of $\Sigma$. 
Let $e$ and $e'$ be two distinct $x$-edges of $\Lambda$, such that 
\begin{align*}
e =&\{~[1,ic][-r,rb]^j,[1,i][-r,rb]^jx~\}, \\ 
e'=& \{~[1,i'c][-r,rb]^{j'},[1,i'c][-r,rb]^{j'}x~\}.
\end{align*}

Then, $e$ and $e'$ belong to the same component of 
$\Sigma$ if and only if $[1,ic][-r,rb]^j D= [1,i'c][-r,rb]^{j'} D$. Equivalently, $z \in D$, where 
\begin{align*}
z &= [-r,br]^{-j}[1,(i'-i)c][-r,br]^j [-r,br]^{j'-j} \\ 
&=\big[~(-r)^{j'-j},(-r)^{-j}(i'-i)c+
    \big(r-r^2+r^3-\cdots+(-r)^{j'-j'}\big)b~\big].
    \end{align*}

If $j'=j$, then $z \in D$ if and only if 
$(-r)^{-j}(i'-i)c \in \sg{a}$. By \eqref{eq:cap}, this forces $i'=i$, 
contradicting that $e \neq e'$. 

Let $j' > j$. Then $z \in D$ if and only if $j'=j+m$ and 
\[
[1,(-r)^{-j}(i'-i)c] \in \big\langle~[1,a]~\big\rangle+
\big[~1,\big(1-r+r^2-r^3+\cdots+(-r)^{j'-j'}\big)b~\big].
\]
By \eqref{eq:cap}, we obtain that $i'$ is uniquely determined by $i$. 
We conclude that $|\edge(\Lambda) \cap \edge(\Sigma_i)|=2$ 
for every $i \in [pm]$. 
\end{proof}

\end{document}